\DeclareMathOperator{\Ass}{Ass}
\DeclareMathOperator{\cid}{CI-dim}
\DeclareMathOperator{\cidim}{CI-dim}
\DeclareMathOperator{\cmdim}{CM-dim}
\DeclareMathOperator{\curv}{curv}
\DeclareMathOperator{\cx}{cx}
\DeclareMathOperator{\depth}{depth}
\DeclareMathOperator{\Ext}{Ext}
\DeclareMathOperator{\gdim}{G-dim}
\DeclareMathOperator{\gcdim}{G_C-dim}
\DeclareMathOperator{\gcodim}{G_{\it C/xC}-dim}
\DeclareMathOperator{\grdim}{G_R-dim}
\DeclareMathOperator{\grade}{grade}
\DeclareMathOperator{\hdim}{H-dim}
\DeclareMathOperator{\Hom}{Hom}
\DeclareMathOperator{\id}{id}
\DeclareMathOperator{\injcx}{inj\,cx}
\DeclareMathOperator{\injcurv}{inj\,curv}
\DeclareMathOperator{\Ker}{Ker}
\DeclareMathOperator{\pd}{pd}
\DeclareMathOperator{\projcx}{proj\,cx}
\DeclareMathOperator{\projcurv}{proj\,curv}
\DeclareMathOperator{\rank}{rank}
\DeclareMathOperator{\Tor}{Tor}
\renewcommand{\ge}{\geqslant}
\renewcommand{\le}{\leqslant}
\newcommand{\fm}{\mathfrak{m}}
\newcommand{\fn}{\mathfrak{n}}
\renewcommand{\iff}{if and only if }
\theoremstyle{plain}
\newtheorem{theorem}{Theorem}[section]
\newtheorem{lemma}[theorem]{Lemma}
\newtheorem{proposition}[theorem]{Proposition}
\newtheorem{corollary}[theorem]{Corollary}
\theoremstyle{definition}
\newtheorem{definition}[theorem]{Definition}
\newtheorem{example}[theorem]{Example}
\newtheorem{para}[theorem]{}
\newtheorem{question}[theorem]{Question}
\theoremstyle{remark}
\newtheorem{remark}[theorem]{Remark}
\numberwithin{equation}{section}
\title[Homological dimensions of Burch submodules]{Homological dimensions of Burch ideals, submodules and quotients}
\author{Dipankar Ghosh}
\address{Department of Mathematics, Indian Institute of Technology Kharagpur, West Bengal - 721302, India}
\email{dipankar@maths.iitkgp.ac.in, dipug23@gmail.com}
\author{Aniruddha Saha}
\address{Department of Mathematics, Indian Institute of Technology Hyderabad, Kandi, Sangareddy - 502285, Telangana, India}
\email{ma20resch11001@iith.ac.in, sahaa43@gmail.com}
\date{November 21, 2022}
\subjclass[2010]{Primary 13C13, 13D05, 13D07, 13H10, 13B22}
\keywords{Burch ideals and submodules; Integrally closed ideals; Various local rings; Homological dimensions; Vanishing of Ext}
\begin{document}

\pagenumbering{arabic}
\thispagestyle{empty}

\begin{abstract}
	The notion of Burch ideals and Burch submodules were introduced (and studied) by Dao-Kobayashi-Takahashi in 2020 and Dey-Kobayashi in 2022 respectively. The aim of this article is to characterize various local rings in terms of homological invariants of Burch ideals, Burch submodules, or that of the corresponding quotients. Specific applications of our results include the following: Let $(R,\fm)$ be a commutative Noetherian local ring. Let $M=I$ be an integrally closed ideal of $R$ such that $\depth(R/I)=0$, or $M = \fm N \neq 0$ for some submodule $N$ of a finitely generated $R$-module $L$ such that either $\depth(N)\ge 1$ or $L$ is free. It is shown that: (1) $I$ has maximal projective $($resp., injective$)$ complexity and curvature. (2) $R$ is Gorenstein \iff $\Ext_R^n(M,R)=0$ for any three consecutive values of $n \ge \max\{\depth(R)-1,0\}$. (3) $R$ is CM (Cohen-Macaulay) \iff $\cmdim_R(M)$ is finite.
\end{abstract}
\maketitle

\section{Introduction}

Motivated by the results of Burch \cite{Bu68}, the notion of Burch ideals and Burch submodules were introduced (and studied) by Dao-Kobayashi-Takahashi and Dey-Kobayashi in \cite{DKT20} and \cite{DK23} respectively. These include large well-studied classes of ideals and modules. In this article, our aim is to study homological dimensions of Burch ideals, submodules and their quotients, and characterize various local rings.
%

{\it Throughout, $(R,\fm,k)$ is a commutative Noetherian local ring. All $R$-modules are assumed to be finitely generated.} Let $M$ be a Burch submodule of some $R$-module $L$, i.e., $\fm (M:_L \fm) \neq \fm M$. It follows from a result of Avramov \cite[Thm.~4]{Avr96} that $M$ has maximal projective $($resp., injective$)$ complexity and curvature. From this fact and the other existing results in the literature, in Theorem~\ref{thm:H-dim-Burch-submodules}, we observe that $R$ is H \iff $\hdim_R(M)<\infty$ \iff $\hdim_R(L/M)<\infty$, where H stands for projective, injective (or regular in case of rings), complete intersection and Gorenstein respectively. In short, these are written here as proj, inj, CI and G respectively. We prove the counterpart of these results for CM (Cohen-Macaulay) dimension, and considerably strengthen the result on G-dimension as follows.

\begin{theorem}[See Theorems~\ref{thm:CM-dim-Burch-submodules} and \ref{thm:Gor-char-vanishing-Ext} for more detailed results]\label{thm:CM-Gor-Main-results}
	Let $M$ be a Burch submodule of some $R$-module $L$.
	\begin{enumerate}[\rm (1)]
		\item When $\depth(M)\ge 1$, the ring $R$ is {\rm CM} $\Longleftrightarrow$ $\cmdim_R(M)<\infty$.
		\item When $L$ is free $($e.g., when $M=I$ is a Burch ideal of $R=L$$)$, the ring
		\begin{enumerate}[\rm (i)]
			\item $R$ is Gorenstein \iff $\Ext_R^n(M,R)=0$ for any three consecutive values of $n \ge \max\{\depth(R)-1,0\}$.
			\item $R$ is {\rm CM} $\Longleftrightarrow$ $\cmdim_R(M)<\infty$ $\Longleftrightarrow$ $\cmdim_R(L/M)<\infty$.
		\end{enumerate}
	\end{enumerate}
\end{theorem}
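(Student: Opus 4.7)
The three statements admit a unified treatment via the defining feature of a Burch submodule: by the results of \cite{DKT20,DK22}, the residue field $k$ sits as a direct summand of a sufficiently low syzygy of $M$, and symmetrically of $L/M$ when $L$ is free. This reduction lets us transfer $\Ext$-vanishing and finiteness of homological dimensions between $M$, $L/M$, and $k$, and then invoke the classical ring-theoretic characterizations in terms of $k$.

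For part~(2)(i), the forward implication rests on the Auslander-Bridger formula. Since $L$ is free, $M$ is torsion-free, so $\depth_R M \ge \min\{1,\depth R\}$. If $R$ is Gorenstein, every finitely generated module has finite G-dimension and $\gdim_R M = \depth R - \depth_R M \le \max\{\depth R - 1, 0\}$; consequently $\Ext_R^n(M,R) = 0$ for all $n > \max\{\depth R - 1, 0\}$, giving the required three consecutive vanishings. Conversely, assume $\Ext_R^n(M,R) = 0$ for three consecutive $n \ge \max\{\depth R - 1, 0\}$. The Burch identity together with dimension shifting $\Ext_R^{n+1}(M,R) \cong \Ext_R^n(\Omega_R M, R)$ yields three consecutive vanishings of $\Ext_R^\ast(k,R)$ whose largest index exceeds $\depth R$ (equality with $\depth R$ being ruled out by $\Ext_R^{\depth R}(k,R) \neq 0$, the defining non-vanishing of depth). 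By the standard criterion that $\Ext_R^n(k,R) = 0$ for some $n > \depth R$ forces $\injdim_R R < \infty$, we conclude $R$ is Gorenstein.

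For parts~(1) and~(2)(ii), the implication ``finite CM-dim $\Rightarrow R$ is CM'' again reduces to $k$: CM-dim passes to direct summands and is well-behaved under syzygies, so the Burch identity transfers finiteness of $\cmdim_R M$ (or $\cmdim_R(L/M)$) to $\cmdim_R(k) < \infty$, whence $R$ is CM by Gerko's theorem. The converse ``$R$ CM $\Rightarrow \cmdim_R M < \infty$'' is the substantive direction, because over a non-Gorenstein CM ring not every module has finite CM-dim. My plan is to construct an explicit Gorenstein quasi-deformation $R \to R' \leftarrow Q$---available for CM $R$ through a Cohen presentation of $\widehat R$ as a quotient of a Gorenstein ring by a regular sequence---and verify $\gdim_Q(M \otimes_R R') < \infty$ using the depth hypothesis $\depth M \ge 1$ (or the freeness of $L$) together with an AB-type argument applied to the Burch-summand structure.

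The main obstacle is precisely this last verification of finite G-dimension over the Gorenstein cover: the Burch structure combined with the depth/free hypothesis must be used in an essential way, since the desired finiteness fails in general. The remaining ingredients---dimension shifting, the classical $\Ext$-vanishing criterion for $k$, and passage to direct summands---are routine once the Burch-to-$k$ reduction is set up.
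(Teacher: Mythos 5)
Your proposal has two genuine gaps, one of logic and one of substance. First, you have the CM-dimension equivalence backwards. You call ``$R$ CM $\Rightarrow \cmdim_R(M)<\infty$'' the substantive direction ``because over a non-Gorenstein CM ring not every module has finite CM-dim.'' That is false: by Gerko's theorem (\cite[Thm.~3.9]{Ge01}, quoted in \ref{para:char-local-rings-via-H-dim}), $R$ is CM if and only if \emph{every} finitely generated module has finite CM-dimension --- this is the entire design purpose of the invariant. So the direction you plan to prove via an explicit Gorenstein quasi-deformation is immediate and needs no Burch hypothesis, while the direction you dismiss as ``routine'' ($\cmdim_R(M)<\infty \Rightarrow R$ CM) is where all the work lies. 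It is not routine because the actual Burch-to-$k$ mechanism is not ``$k$ is a direct summand of a low syzygy of $M$'' (which is not established and is not what \cite{DK22} provides), but rather: when $\depth(M)\ge 1$ there is an $x\in\fm\smallsetminus\fm^2$ regular on $R$ and $M$ with $k$ a direct summand of $M/xM$ over $R/xR$ (\cite[Lem.~3.7]{DK22}), and when $\depth(M)=0$ one must first pass to $S=R[X]_{\langle\fm,X\rangle}$ and $M'=(M+XL[X])_{\langle\fm,X\rangle}$ to raise the depth. Carrying finiteness of $\cmdim$ through these two operations --- modding out a regular element and the polynomial extension --- is exactly the nontrivial content (Lemmas~\ref{lem:H-dim-M-M//xM}, \ref{lem:CM-direct-sum}, \ref{lem:CM-dim-relation-mod-x-R-S} and \ref{lem:Extension-of-Burch-submodule}), and it requires semidualizing-module arguments, not just ``passage to direct summands and syzygies.''

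Second, your argument for (2)(i) rests on the same unestablished premise. You propose to convert three consecutive vanishings of $\Ext_R^*(M,R)$ into vanishings of $\Ext_R^*(k,R)$ above $\depth(R)$ by dimension shifting against a syzygy containing $k$; but no such syzygy is available, and even granting one, shifting $\Ext^n(k,R)\mid\Ext^{n+t}(M,R)$ moves the usable indices \emph{down} by $t$, so starting from $n\ge\max\{\depth(R)-1,0\}$ you cannot guarantee landing strictly above $\depth(R)$ (your appeal to $\Ext_R^{\depth R}(k,R)\neq 0$ would then produce a contradiction with a satisfiable hypothesis, signalling that the transfer itself fails). The paper's route is different: from $0\to M\to L\to L/M\to 0$ with $L$ free one gets two consecutive vanishings of $\Ext_R^*(L/M,R)$; the isomorphism $M'/XM'\cong M\oplus L/M$ plus Nakayama lifts these to $\Ext_S^*(M',R')$; and since $\depth_S(M')\ge 1$, the criterion \cite[Prop.~3.18]{DK22} (two consecutive vanishings against a positive-depth Burch submodule force finite injective dimension) yields $\id_S(S)<\infty$, hence $R$ Gorenstein. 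The third consecutive vanishing in the statement is precisely the price of removing the depth hypothesis via the $X$-extension; your sketch does not account for it.
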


The technique used in the proof of the results on CM-dimension in Theorem~\ref{thm:CM-Gor-Main-results} can be applied to get simple and elementary proofs of the analogous results on other homological dimensions, see Theorem~\ref{thm:CM-dim-Burch-submodules} for a combined proof of all these results.

The main motivation for Theorems~\ref{thm:H-dim-Burch-submodules}, \ref{thm:CM-dim-Burch-submodules} and \ref{thm:Gor-char-vanishing-Ext} came from the characterizations stated below. A remarkable result due to
Burch \cite[pp~947, Cor.~3]{Bu68} states that $R$ is regular \iff projective dimension $\pd_R(I)$ is finite, where $I$ is an $\fm$-primary integrally closed ideal of $R$. The analogous results for injective and CI dimensions are shown in \cite[Cor.~6.12]{CGSZ18} and \cite[Cor.~2.7]{GP22} respectively. Moreover, in \cite[Thm.~2.6]{GP22}, it is shown that such an ideal $I$ has maximal complexity and curvature. For G-dimension, in \cite[Thm.~1.1]{CS16}, Celikbas--Sather-Wagstaff proved that $R$ is Gorenstein \iff $\gdim_R(I)$ is finite for some integrally closed ideal $I$ of $R$ such that $\depth(R/I)=0$ (a much weaker condition than $\fm$-primary).
We show that under some mild conditions, an integrally closed ideal $I$ with $\depth(R/I)=0$ is a Burch ideal, see Proposition~\ref{prop:int-closed-ideal-Burch} and Remark~\ref{rmk:DKT20-example-not-true}. Hence, as applications of Theorems~\ref{thm:H-dim-Burch-submodules}, \ref{thm:CM-dim-Burch-submodules} and \ref{thm:Gor-char-vanishing-Ext}, in Corollary~\ref{cor:characterizations-via-int-closed-ideal},
we considerably strengthen all these results. Furthermore, we obtain the analogous result for CM-dimension.

\begin{corollary}[See Corollary~\ref{cor:characterizations-via-int-closed-ideal} for more detailed results]
	Let $I$ be an integrally closed ideal of $R$ such that $\depth(R/I)=0$. Then $I$ has maximal projective $($resp., injective$)$ complexity and curvature.
	Furthermore, $R$ is Gorenstein \iff $\Ext_R^n(I,R)=0$ for any three consecutive values of $n\ge \max\{\depth(R)-1,0\}$. Moreover, $R$ is {\rm CM} \iff $\cmdim_R(I)<\infty$.	
	Particularly, it follows that
	\begin{center}
		$R$ is {\rm H} $\Longleftrightarrow$ $\hdim_R(I)<\infty$ \;$($equivalently, $\hdim_R(R/I)<\infty$$)$,
	\end{center}
	where {\rm H} denotes {\rm proj}, {\rm inj} $($regular in case of rings$)$, {\rm CI}, {\rm G} and {\rm CM} respectively.
\end{corollary}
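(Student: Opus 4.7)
The plan is to reduce the corollary to a direct application of the three main theorems on homological dimensions of Burch submodules proved earlier, namely Theorem~\ref{thm:H-dim-Burch-submodules}, Theorem~\ref{thm:CM-dim-Burch-submodules}, and Theorem~\ref{thm:Gor-char-vanishing-Ext}. The entire substance of the corollary therefore concentrates in one reduction step: verifying that an integrally closed ideal $I$ with $\depth(R/I)=0$ is itself a Burch ideal of $R$, that is, $\fm(I :_R \fm) \neq \fm I$.

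For this reduction I would appeal to Proposition~\ref{prop:int-closed-ideal-Burch} together with Remark~\ref{rmk:DKT20-example-not-true}, which is precisely the statement foreshadowed in the introduction. The argument there will use the depth hypothesis $\depth(R/I)=0$ to produce an element $x\in (I :_R \fm)\setminus I$ (so that $(I :_R \fm)\supsetneq I$), and then leverage integral closure of $I$ to exclude $\fm x\subseteq \fm I$, yielding $\fm(I :_R \fm)\not\subseteq \fm I$. This is the only genuinely nontrivial input required; everything else follows formally from the theorems already established.

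Once the Burch condition is in hand, each assertion is obtained by specializing the main theorems to $(M,L):=(I,R)$, noting that $L=R$ is free so that the ``$L$ free'' branch of Theorems~\ref{thm:CM-dim-Burch-submodules} and~\ref{thm:Gor-char-vanishing-Ext} applies. Concretely: the maximal projective and injective complexity and curvature of $I$ is the Avramov-type conclusion already recorded in Theorem~\ref{thm:H-dim-Burch-submodules}; the Gorenstein criterion via three consecutive vanishing of $\Ext_R^n(I,R)$ for $n\ge\max\{\depth(R)-1,0\}$ is Theorem~\ref{thm:Gor-char-vanishing-Ext}; the Cohen--Macaulay criterion via finiteness of $\cmdim_R(I)$, and equivalently $\cmdim_R(R/I)$, is Theorem~\ref{thm:CM-dim-Burch-submodules}; finally, the uniform characterization of the property H (for H $\in\{\text{proj},\text{inj},\text{CI},\text{G}\}$) in terms of finiteness of $\hdim_R(I)$ or $\hdim_R(R/I)$ is Theorem~\ref{thm:H-dim-Burch-submodules}, with the CM case handled by Theorem~\ref{thm:CM-dim-Burch-submodules}.

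The main obstacle is thus exclusively the Burch reduction provided by Proposition~\ref{prop:int-closed-ideal-Burch}; once that is granted, the corollary is merely a packaging step bundling together the consequences of the three main theorems for the special choice $(M,L)=(I,R)$.
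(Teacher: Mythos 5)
Your reduction is the same one the paper uses: show that $I$ is a Burch ideal and then specialize Theorems~\ref{thm:H-dim-Burch-submodules}, \ref{thm:CM-dim-Burch-submodules} and \ref{thm:Gor-char-vanishing-Ext} (the latter via Corollary~\ref{cor:char-Gor-Burch-ideals}) to $(M,L)=(I,R)$ with $L=R$ free. There is, however, one step you omit that the paper does not: Proposition~\ref{prop:int-closed-ideal-Burch} is stated only under the hypotheses that the residue field $k$ is infinite and $R$ is not a field, because its proof rests on Goto's theorem that an integrally closed ideal is either $\fm$-full or equal to $\sqrt{0}$, and that theorem needs $k$ infinite. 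The corollary makes no such assumption, so your appeal to the proposition does not apply verbatim when $k$ is finite. The paper closes this gap by first disposing of the trivial case where $R$ is a field and then passing to the faithfully flat local extension $R[X]_{\fm R[X]}$, whose residue field is infinite; to make that reduction honest one must also note that $IR[X]_{\fm R[X]}$ remains integrally closed with depth of the quotient still zero, and that every invariant in play (Betti and Bass numbers, hence complexity and curvature, the various $\hdim$'s, vanishing of $\Ext^n_R(I,R)$, and the properties regular, CI, Gorenstein, CM of the ring) ascends and descends along this extension.

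Two smaller points. Your sketch of why Proposition~\ref{prop:int-closed-ideal-Burch} holds (produce $x\in (I:_R\fm)\smallsetminus I$ and use integral closedness to rule out $\fm x\subseteq \fm I$) is not how the paper argues --- it splits into the cases $I$ $\fm$-full and $I=\sqrt{0}$ --- but since you invoke the proposition only as a black box this is immaterial. For the parenthetical equivalence $\cmdim_R(I)<\infty \Leftrightarrow \cmdim_R(R/I)<\infty$ you should cite Lemma~\ref{lem:CM-direct-sum} (or Lemma~\ref{lem:s.e.s-CM-dim}); Theorem~\ref{thm:CM-dim-Burch-submodules} as stated says nothing about $L/M$.
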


Levin-Vasconcelos in \cite[Thm.~1.1 and remark afterward]{LV68} showed that $R$ is regular \iff $\pd_R(\fm N)<\infty$, which is equivalent to that $\id_R(\fm N)<\infty$, where $N$ is an $R$-module such that $\fm N\neq 0$. It follows from \cite[Cor.~5]{Avr96} that $R$ is CI \iff $\cidim_R(\fm N)<\infty$.
Analogous result for G-dimension can be derived from \cite[pp.~316, Lem.]{LV68} and \cite[Thm.~4.4]{CS16}.
Motivated by the result of Levin-Vasconcelos, in \cite[Thm.~1]{AP05}, Asadollahi-Puthenpurakal proved that if $N$ is an $R$-module of positive depth, then $R$ is H \iff $\hdim_R(\fm^n N)<\infty$ for some $n\ge \rho(N)$ (cf.~\cite[1.6]{AP05} for the invariant $\rho(N)$), where H can be proj, CI, G and CM. 
As other applications of Theorems~\ref{thm:H-dim-Burch-submodules}, \ref{thm:CM-dim-Burch-submodules} and \ref{thm:Gor-char-vanishing-Ext}, we combine all these results, and considerably strengthen some of them. Moreover, we obtain a few variations.

\begin{corollary}[See Corollary~\ref{cor:characterizations-via-mN}]
	Let $N$ be a submodule of an $R$-module $L$ such that $\fm N \neq 0$. Then the following statements hold true.
	\begin{enumerate}[\rm (1)]
		\item $R$ is {\rm H} \iff $\hdim_R(\fm N)<\infty$ \iff $\hdim_R(L/\fm N)<\infty$, where {\rm H} denotes {\rm proj}, {\rm inj} $($regular in case of rings$)$, {\rm CI} and {\rm G} respectively.
		\item If either $\depth(N)\ge 1$, or $L$ is free $($e.g., $N=J$ is an ideal of $R=L$$)$, then
		\begin{enumerate}[\rm (i)]
			\item $R$ is {\rm CM} \iff $\cmdim_R(\fm N)<\infty$.
			\item $R$ is Gorenstein \iff $\Ext_R^n(\fm N,R)=0$ for any three consecutive values of $n\ge \max\{\depth(R)-1,0\}$.
		\end{enumerate}
	\end{enumerate}
\end{corollary}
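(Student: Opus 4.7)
The plan is to reduce the entire corollary to the three theorems already proved for Burch submodules, by showing that $\fm N$ is automatically a Burch submodule of $L$ under the single hypothesis $\fm N \neq 0$. Once this is done, part (1) follows from Theorem~\ref{thm:H-dim-Burch-submodules} applied to $M=\fm N$, part (2)(i) from Theorem~\ref{thm:CM-dim-Burch-submodules} after a short depth check, and part (2)(ii) from Theorem~\ref{thm:Gor-char-vanishing-Ext}.

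The crucial step is to verify $\fm(\fm N :_L \fm) \neq \fm \cdot (\fm N)$. For every $y \in N$, one has $\fm y \subseteq \fm N$, so $N \subseteq (\fm N :_L \fm)$, which gives $\fm N \subseteq \fm(\fm N :_L \fm)$. On the other hand, $\fm N$ is a nonzero finitely generated $R$-module, so Nakayama's lemma forces $\fm^2 N = \fm \cdot (\fm N) \subsetneq \fm N$. Combining,
\[
\fm(\fm N :_L \fm) \;\supseteq\; \fm N \;\supsetneq\; \fm^2 N \;=\; \fm \cdot (\fm N),
\]
so $M := \fm N$ is indeed a Burch submodule of $L$.

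Part (1) is then an immediate application of Theorem~\ref{thm:H-dim-Burch-submodules} to $M=\fm N$, which simultaneously delivers the equivalences in terms of $\hdim_R(\fm N)$ and $\hdim_R(L/\fm N)$ for each of H = proj, inj, CI, G. For part (2), the additional hypothesis is exactly what is needed to invoke the more delicate CM-dimension and Ext-vanishing results. If $L$ is free, this is already built into Theorems~\ref{thm:CM-dim-Burch-submodules} and \ref{thm:Gor-char-vanishing-Ext}. If instead $\depth(N) \ge 1$, then any $N$-regular element $x \in \fm$ acts regularly on the submodule $\fm N \subseteq N$ (an element of $\fm N$ annihilated by $x$ already lies in $N$ and must vanish), so $\depth(\fm N) \ge 1$. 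Either way, the depth/freeness hypotheses of Theorems~\ref{thm:CM-dim-Burch-submodules} and \ref{thm:Gor-char-vanishing-Ext} are met by $M=\fm N$, and applying them yields (i) and (ii).

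I do not foresee a genuine obstacle: the entire argument pivots on the Burch verification, which is a one-step Nakayama argument. The remaining work is simply the bookkeeping of which earlier theorem is being invoked under which hypothesis.
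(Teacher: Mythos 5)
Your overall strategy---reduce everything to the Burch-submodule theorems by checking that $M=\fm N$ is Burch---is exactly the paper's route, and your direct verification of the Burch condition (via $N\subseteq(\fm N:_L\fm)$ together with Nakayama giving $\fm^2N\subsetneq\fm N$) is correct; the paper simply cites Example~\ref{exam:Burch-submodules-quotient} for this fact. Part (1) via Theorem~\ref{thm:H-dim-Burch-submodules}.(6) and part (2)(i) via Theorem~\ref{thm:CM-dim-Burch-submodules} are handled as in the paper, and your observation that an $N$-regular element acts regularly on the submodule $\fm N$, so that $\depth(N)\ge 1$ forces $\depth(\fm N)\ge 1$, correctly supplies the hypothesis of Theorem~\ref{thm:CM-dim-Burch-submodules} in that case.

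There is, however, a genuine gap in part (2)(ii) in the case $\depth(N)\ge 1$ with $L$ not free. You assert that ``the depth/freeness hypotheses of Theorem~\ref{thm:Gor-char-vanishing-Ext} are met,'' but that theorem has no depth hypothesis to meet: its hypotheses are the Burch condition on $M$ \emph{together with} the vanishing $\Ext_R^j(L,N)=0$ for $j=n,n+1$. When $L$ is free and $N=R$ this extra vanishing is automatic for $j\ge 1$, which is precisely the content of Corollary~\ref{cor:char-Gor-Burch-ideals}; but for a general $L$ the groups $\Ext_R^j(L,R)$ need not vanish, so Theorem~\ref{thm:Gor-char-vanishing-Ext} simply does not apply and your argument proves nothing in this case. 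The correct tool, and the one the paper uses, is \ref{Dey-Kobayashi-results}.(2) (i.e., [DK22, Prop.~3.18]): since $\fm N$ is a Burch submodule with $\depth(\fm N)\ge 1$, the vanishing of $\Ext_R^i(\fm N,R)$ for two consecutive values of $i\ge\depth(R)-1$ (in particular for three consecutive values of $n\ge\max\{\depth(R)-1,0\}$) already yields $\id_R(R)<\infty$. In short, you have the roles of the two results reversed: Theorem~\ref{thm:Gor-char-vanishing-Ext} exists precisely to remove the depth hypothesis from \ref{Dey-Kobayashi-results}.(2) at the cost of extra Ext-vanishing on $L$, so in the positive-depth case you should invoke \ref{Dey-Kobayashi-results}.(2) directly rather than Theorem~\ref{thm:Gor-char-vanishing-Ext}.
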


Now we describe in brief the contents of this article. In Section~\ref{sec:hom-inv}, we recall various homological invariants that are used in the paper. In order to prove the results on CI and CM dimensions, we need a number of lemmas, which are proved in Section~\ref{sec:G-C-dim-CM-dim}. The preliminaries on Burch submodules are discussed in Section~\ref{sec:Burch-ideals}. Particularly, Lemma~\ref{lem:Extension-of-Burch-submodule} is proved, which is crucial to obtain the results on Burch submodules of depth zero. Finally, our main results along with their applications are shown in Section~\ref{sec:Main-results-applications}.

\section{Homological invariants}\label{sec:hom-inv}

In this section, we recall the terminologies that are used in the subsequent sections. Let $M$ be an $R$-module. Let $ \beta_n^R(M) $ denote the $n$th Betti number of $M$, i.e., $ \beta_n^R(M) := \rank_k\left( \Ext_R^n(M,k) \right) $. The $i$th Bass number of $M$ is denoted by $\mu_i^R(M)$, i.e., $\mu_i^R(M):=\rank_{k}(\Ext_{R}^i(k,M))$.

\begin{para}\label{proj-inj-complexity}
\cite[Sec.~1]{Avr96} The projective complexity $\projcx_R(M)$ (resp., injective complexity $\injcx_R(M)$) is equal to a non-negative integer $b$ if $b-1$ is the smallest possible value such that there exists $\alpha>0$ satisfying $\beta_n^R(M) \le \alpha n^{b-1}$ (resp., $\mu_n^R(M) \le \alpha n^{b-1}$) for all $n\gg 0$. The projective complexity of $M$ is also denoted by $\cx_R(M)$.
\end{para}

\begin{para}\label{proj-inj-curv}
	\cite[Sec.~1]{Avr96} The projective and injective curvatures of $M$ are defined by $$\projcurv_R(M)=\limsup_{n \to \infty} \sqrt[n]{\beta_n^R(M)} \quad \mbox{and} \quad \injcurv_R(M)=\limsup_{n \to \infty} \sqrt[n]{\mu_n^R(M)}.$$
	The projective curvature of $M$ is also denoted by $\curv_R(M)$.
\end{para}

\begin{para}\label{para:global-cx-curv}
	\cite[Prop.~2]{Avr96} Note that $\max\{ \projcx_R(M) : \mbox{$M$ is an $R$-module} \} = \projcx_R(k)$. Moreover, in the above equality, $\projcx$ can be replaced by $\injcx$, $\projcurv$ and $\injcurv$ respectively. An $R$-module $M$ is said to have maximal projective (resp., injective) complexity and curvature if these invariants are same for both $M$ and $k$.
\end{para}

The notion of CI-dimension is due to Avramov-Gasharov-Peeva \cite{AGP97}.

\begin{para}\label{defn:CI-dim}
	\cite[(1.1) and (1.2)]{AGP97} A (codimension $c$) quasi-deformation of $R$ is a diagram of local ring homomorphisms $R \to R' \leftarrow Q$, where $R \to R'$ is flat, and $R' \leftarrow Q$ is a (codimension $c$) deformation, i.e., $R' \leftarrow Q$ is surjective with kernel generated by a (length $c$) regular sequence. Set $M' := M \otimes_R R'$. Define $\cid_R(0) := 0$. When $M\neq 0$, the CI-dimension of $M$ is defined to be $\cid_R(M) = $
	\[
		\inf\{ \pd_Q(M') - \pd_Q(R') : R \to R' \leftarrow Q \mbox{ is a quasi-deformation}\}.
	\]
\end{para}


The notion of semidualizing modules was introduced by Golod \cite{Go84} by the name of suitable modules. In the same paper, he also introduced the notions of $G_C$-projective modules and $G_C$-dimension.

\begin{para}\cite{Go84}
	An $R$-module $C$ is said to be semidualizing (or suitable) if
	\begin{enumerate}
		\item the natural homomorphism $R \to \Hom_R(C,C)$ is an isomorphism, and
		\item $\Ext_{R}^i(C,C)=0$ for every $i>0$.
	\end{enumerate}
\end{para}

\begin{para}\cite{Go84}
	Let $C$ be a semidualizing $R$-module. Set $(-)^{\dagger} := \Hom_R(-,C)$. An $R$-module $X$ is called $G_C$-projective if
	\begin{enumerate}
		\item the natural homomorphism $X \to X^{\dagger\dagger}$ is an isomorphism, and
		\item $ \Ext_R^i(X,C) = \Ext_R^i(X^\dagger,C) = 0 $ for every $ i > 0 $.
	\end{enumerate}
	The $G_C$-dimension of $M$, denoted by $\gcdim_R(M)$, is the least non-negative integer $n$ such that there exists an exact sequence $0 \to X_n \to \cdot\cdot\cdot \to X_1 \to X_0\to M \to 0$
	of $R$-modules such that each $X_i$ is $G_C$-projective. If such $n$ does not exist, then $\gcdim_R(M) := \infty$. The G-dimension of $M$ is defined to be  $\grdim_R(M)$, i.e., $\gcdim_R(M)$ when $C=R$, which is due to Auslander-Bridger \cite{AB69}.
\end{para}

In \cite{Ge01}, Gerko introduced the notion of CM-dimension.

\begin{para}\cite[3.1 and 3.2]{Ge01}\label{para:CM-defn-1}
	A G-quasi-deformation of $R$ is a diagram of local homomorphisms $R \to R' \leftarrow Q$, where $R \to R'$ is flat, and $R' \leftarrow Q$ is a G-deformation, i.e., a surjective homomorphism whose kernel $I$ is a G-perfect ideal of $Q$ (which means that $\gdim_Q(Q/I) = \grade(Q/I)$). Set $M' := M \otimes_R R'$. Define $\cmdim_R(0) := 0$. When $ M \neq 0 $, the CM-dimension of $M$ is defined to be $\cmdim_R(M) = $
	\[
		\inf\{ \gdim_Q(M') - \gdim_Q(R') : R \to R' \leftarrow Q \mbox{ is a G-quasi-deformation}\}.
	\]
\end{para}

The following definition of CM-dimension is equivalent to that in \ref{para:CM-defn-1}.

\begin{para}\label{para:CM-defn-2}\cite[pp.~1177, $3.2'$]{Ge01}
		If $M\neq0$, then $\cmdim_R(M)$ is
		$\inf\{\gcdim_{R'} (M \otimes_{R} R'): R \to R' $ is a local flat extension and $C$ is a semidualizing $R'$-module\}.
\end{para}

In \cite{Ge01}, Gerko gave the definitions \ref{para:CM-defn-1} and \ref{para:CM-defn-2} of CM-dimension, and mentioned that both are equivalent without giving an explicit proof. So we add a proof here.

\begin{remark}
	The definitions given in \ref{para:CM-defn-1} and \ref{para:CM-defn-2} of CM-dimension are equivalent.
\end{remark}

\begin{proof}
	Set $c_1 := \inf\{ \gdim_Q(M') - \gdim_Q(R') : R \to R' \leftarrow Q$ is a G-quasi-deformation\} and $c_2 := \inf\{\gcdim_{R'} (M \otimes_{R} R'): R \to R' $ is a local flat extension and $C$ is a semidualizing $R'$-module\}. We first prove that $c_1 \le c_2$. Consider a local flat extension $R \to R'$, and a semidualizing $R'$-module $C$ such that $\gcdim_{R'}(M \otimes_R R') < \infty$. Set $S:= R' \oplus C$, the idealization of $C$ over $R'$. Then $R \to R' \leftarrow S$ is a G-quasi deformation of $R$, and
	\begin{align*}
	\gcdim_{R'}(M \otimes_R R') = \gdim_S (M \otimes_R R') = \gdim_S (M \otimes_R R')- \gdim_S(R'),
	\end{align*}
	see \cite[Lem.~3.6]{Ge01} and the proof of \cite[Thm.~3.7]{Ge01}. It follows that $c_1 \le c_2$.
	
	In order to proof $c_1 \ge c_2$, we consider a G-quasi deformation $R \to R' \leftarrow Q$ such that $\gdim_Q(M \otimes_{R}R') < \infty$. In view of \cite[Thm.~1.4]{Ge01}, the module $C:= \Ext_{Q}^{\grade_{Q}(R')} (R',Q)$ is semidualizing over $R'$, and $\gdim_Q(M \otimes_{R}R') = \grade_{Q}(R')+ \gcdim_{R'}(M\otimes_R R')$, hence $\gdim_Q(M \otimes_{R}R') = \gdim_Q(R')+ \gcdim_{R'}(M\otimes_R R')$. This yields that $c_1 \ge c_2$. Thus $c_1=c_2$.
\end{proof}

\begin{para}\label{para:H-dim-inequalities}
	For an $R$-module $M$, by \cite[3.2]{Ge01} and \cite[(1.4)]{AGP97}, there are inequalities:
	\[
		\cmdim_R(M) \le \gdim_R(M) \le \cid_R(M) \le \pd_R(M).
	\]
	If one of these is finite, then it is equal to those to its left, which is same as $\depth(R) - \depth(M)$, see \cite[3.8]{Ge01}, \cite[(4.13.b)]{AB69}, \cite[(1.4)]{AGP97} and \cite[1.3.3]{BH93} respectively.
\end{para}

\begin{para}\label{para:char-local-rings-via-H-dim}
	The ring $R$ is H $\Longleftrightarrow$ $\hdim_R(k)<\infty$ $\Longleftrightarrow$ $\hdim_R(M)<\infty$ for every $R$-module $M$, where H can be proj, inj (regular in case of rings), CI, G and CM, see \cite[Thm.~19.2]{Mat86}, \cite[3.1.26]{BH93}, \cite[(1.3)]{AGP97}, \cite[(4.20)]{AB69} and \cite[Thm.~3.9]{Ge01} respectively.
\end{para}

\begin{para}\label{para:H-dim-syz-relations}
	For every $n\ge 0$, $\hdim_R(\Omega_n^R(M)) = \max\{ \hdim_R(M)-n, 0 \}$, where $\Omega_n^R(M)$ denotes the $n$th syzygy module of $M$. These equalities are well known when H is {\rm proj}, CI and G. See \cite[(1.9.1)]{AGP97} for CI. Similar arguments work for CM as well. The projective complexities (resp., curvatures) are same for both $\Omega_1^R(M)$ and $M$.
\end{para}


\begin{para}\label{para:H-dim-direct-sum}
	Let $M$ and $N$ be $R$-modules. Then
	\[
		\hdim_R(M \oplus N) \ge \max\{ \hdim_R(M), \hdim_R(N) \}.
	\]
	It is shown in \cite[Prop.~3.3]{AP05} for H = CM. A similar proof works for CI as well. The inequality becomes equality when H is {\rm proj}, inj and {\rm G}, see, e.g., \cite[1.3.2]{BH93}, \cite[3.1.14]{BH93} and \cite[1.2.7, 1.2.9]{Ch00} respectively. When it is equality, $\hdim$ can also be replaced by $\cx$ and $\curv$ respectively, see \cite[4.2.4.(3)]{Avr98}.
\end{para}

\section{CI and CM dimensions}\label{sec:G-C-dim-CM-dim}

Here we prove a number of lemmas on CI-dimension and CM-dimension.

   \begin{lemma}\label{lem:flat-extension}
		Let $\varphi : R \to R'$ be a flat homomorphism, and $x$ be an $R$-regular element. Then the induced map $\overline{\varphi} : R/xR \to R'/xR'$ is also flat.
	\end{lemma}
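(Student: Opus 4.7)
The plan is to reduce the lemma to the elementary fact that flatness is preserved under arbitrary base change.

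The key step is the identification of $R'/xR'$ with $R' \otimes_R (R/xR)$. The canonical surjection $R' \twoheadrightarrow R' \otimes_R (R/xR)$ has kernel equal to $(xR)R' = \varphi(x)R'$, which is exactly what is denoted $xR'$. Hence this surjection factors through an isomorphism of rings $R'/xR' \xrightarrow{\sim} R' \otimes_R (R/xR)$. Under this identification, the induced map $\overline{\varphi}: R/xR \to R'/xR'$ becomes the base change of $\varphi: R \to R'$ along the quotient map $R \to R/xR$. Since $\varphi$ is flat and flatness of ring homomorphisms is stable under base change, it follows that $\overline{\varphi}$ is flat.

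I do not foresee any real obstacle; the whole argument is a single observation. It is worth remarking that the hypothesis that $x$ is $R$-regular is not actually used in establishing flatness---the same proof goes through for an arbitrary element $x \in R$, or even with $R/xR$ replaced by $R/I$ for any ideal $I$. The regularity hypothesis presumably reflects the intended applications of the lemma, where one additionally uses that $x$ remains regular on $R'$ (an immediate consequence of flatness), but this is not part of what the lemma itself asserts.
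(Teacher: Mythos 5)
Your proof is correct and is essentially the same as the paper's: the paper simply unwinds the base-change-stability argument by hand, checking that a short exact sequence of $R/xR$-modules stays exact after tensoring via the isomorphism $M_i \otimes_R R' \cong M_i \otimes_{R/xR} R'/xR'$, which is exactly your identification $R'/xR' \cong R' \otimes_R (R/xR)$. Your observation that $R$-regularity of $x$ is not needed for flatness is also accurate; the paper's proof does not use it either.
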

	
	\begin{proof}
		Consider a short exact sequence $0 \to M_1 \to M_2 \to M_3 \to 0 $ of $R/xR$-modules. Considering this as an exact sequence of $R$-modules, since $\varphi$ is flat, $0 \to M'_1 \to M'_2 \to M'_3 \to 0 $ is also exact, where $M'_i := M_i \otimes_R R'$. Note that
		\[
		M_i \otimes_R R' \cong (M_i \otimes_{R/xR} R/xR) \otimes_R R' \cong M_i \otimes_{R/xR} R'/xR'.
		\]
		It follows that $\overline{\varphi} : R/xR \to R'/xR'$ is flat.
	\end{proof} 

\begin{lemma}\label{lem:H-dim-M-M//xM}
	Let $x \in R$ be a regular element over both $R$ and $M$. Then the following $($in$)$equalities hold true.
	\begin{enumerate}[\rm (1)]
		\item $\cx_R(M) = \cx_{R/xR}(M/xM)$ and $\curv_R(M) = \curv_{R/xR}(M/xM)$.
		\item $\hdim_{R/xR}(M/xM) \le \hdim_R(M) $, where {\rm H} can be {\rm proj}, {\rm inj}, {\rm CI}, {\rm G} and {\rm CM}. The inequality becomes equality when {\rm H} is proj and {\rm G} respectively.
	\end{enumerate}
\end{lemma}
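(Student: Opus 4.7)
The plan is to reduce everything to a single core observation. Since $x$ is $M$-regular, $\Tor_i^R(M, R/xR) = 0$ for $i \ge 1$, so tensoring any minimal free resolution $P_\bullet \to M$ over $R$ with $\bar R := R/xR$ yields a free resolution $\bar P_\bullet \to \bar M := M/xM$ over $\bar R$. Minimality is inherited because $\partial(P_n) \subseteq \fm P_{n-1}$ reduces to $\bar\partial(\bar P_n) \subseteq (\fm/xR)\,\bar P_{n-1}$. Hence $\beta_n^{\bar R}(\bar M) = \beta_n^R(M)$ for every $n \ge 0$, which at once gives part (1) and the $\pd$ case of part (2) with equality.

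For the injective case of (2), I would apply $\Hom_R(k,-)$ to $0 \to M \xrightarrow{x} M \to \bar M \to 0$; since $x \in \fm$ acts as zero on $\Ext_R^*(k,-)$, the long exact sequence splits into $0 \to \Ext_R^i(k,M) \to \Ext_R^i(k,\bar M) \to \Ext_R^{i+1}(k,M) \to 0$. Combined with the Rees-type identity $\mu_R^i(\bar M) = \mu_{\bar R}^i(\bar M) + \mu_{\bar R}^{i-1}(\bar M)$, valid for the $\bar R$-module $\bar M$ viewed over $R$, and the base case $\Hom_R(k,M)=0$ (forced by $M$-regularity of $x$), solving the resulting recurrence yields $\mu_R^{i+1}(M) = \mu_{\bar R}^i(\bar M)$ for all $i \ge 0$. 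In particular $\injdim_{\bar R}(\bar M) = \injdim_R(M) - 1$ whenever the latter is finite, proving the inequality; incidentally the same relation shows that $\injcx$ and $\injcurv$ coincide over the two rings.

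For the CI case, start from a quasi-deformation $R \to R' \leftarrow Q$ nearly computing $\cid_R(M)$. Let $x' \in R'$ be the image of $x$, choose any preimage $y_0 \in Q$, and use coset prime avoidance on $\Ass(Q)$ to adjust $y_0$ by an element of $J := \ker(Q \twoheadrightarrow R')$ to a $Q$-regular lift $y$; this is possible because $J$ is generated by a $Q$-regular sequence and so cannot sit inside any associated prime of $Q$. The diagram $\bar R \to R'/xR' \leftarrow Q/yQ$ is then a quasi-deformation of $\bar R$: Lemma~\ref{lem:flat-extension} gives flatness of the left arrow, and the residues of the original regular sequence generate the kernel on the right (by local permutation of regular sequences, using that $x'$ is $R'$-regular by flatness of $R \to R'$). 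Applying the $\pd$ equality established above to $(Q,R')$ and to $(Q, M \otimes_R R')$ with the $Q$-regular element $y$ gives $\cid_{\bar R}(\bar M) \le \pd_Q(M \otimes_R R') - \pd_Q(R') = \cid_R(M)$.

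The G-dimension equality is classical (Auslander--Bridger): total reflexivity is preserved under quotienting by a simultaneously $R$- and $M$-regular element, and this forces $\gdim_{\bar R}(\bar M) = \gdim_R(M)$. The CM case then mirrors the CI case using G-quasi-deformations: start from $R \to R' \leftarrow Q$ with G-perfect kernel $J$, lift $x$ to a $Q$-regular $y$ as before, and check that the new kernel $(J+yQ)/yQ$ is G-perfect in $Q/yQ$---both $\gdim_{Q/yQ}(R'/xR') = \gdim_Q(R')$ (by the G-dim descent just invoked) and $\grade_{Q/yQ}\bigl((J+yQ)/yQ\bigr) = \grade_Q(J)$ (since the generators of $J$ remain a regular sequence in $Q/yQ$). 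The inequality $\cmdim_{\bar R}(\bar M) \le \cmdim_R(M)$ drops out as in the CI step. The hard part I expect will be the coset prime-avoidance lift together with the careful bookkeeping that the G-perfect property survives the quotient; once these are nailed down, everything else flows mechanically from the Betti/Bass number descent in the first two paragraphs.
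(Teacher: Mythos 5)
Your treatment of part (1), the projective and injective cases, the CI case, and the G case is sound: the Betti-number descent, the Bass-number recurrence $\mu^i_{R/xR}(M/xM)=\mu^{i+1}_R(M)$, and the lifting of a quasi-deformation along a $Q$-regular preimage of $x$ all work (for CI your coset prime-avoidance step is fine precisely because a nonzero ideal generated by a regular sequence contains a nonzerodivisor, hence lies in no associated prime of $Q$). These portions essentially re-prove facts the paper simply cites from Bruns--Herzog, Avramov--Gasharov--Peeva and Auslander--Bridger.

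The genuine gap is in the CM case, which is the only part the paper actually has to prove. You transport the CI argument to a G-quasi-deformation $R\to R'\leftarrow Q$ with G-perfect kernel $J$, but both of your key justifications are borrowed from the CI setting and do not apply: a G-perfect ideal is \emph{not} generated by a $Q$-regular sequence in general, and it may have $\grade_Q(J)=0$, in which case $J$ \emph{is} contained in associated primes of $Q$ and your stated reason for the coset prime-avoidance lift collapses. (The lift can still be salvaged, but only via a different argument: if $\fp\in\Ass(Q)$ contains $J$, then since $\gdim$ localizes and $\gdim_{Q_\fp}(R'_\fp)=0$ forces $\depth(R'_\fp)=\depth(Q_\fp)=0$, one gets $\fp/J\in\Ass(R')$, which cannot contain the $R'$-regular element $x'$ --- none of which you supply.) Likewise your verification that $(J+yQ)/yQ$ is G-perfect in $Q/yQ$ rests on ``the generators of $J$ remain a regular sequence in $Q/yQ$,'' which is false for a general G-perfect ideal; showing $\grade_{Q/yQ}\bigl((J+yQ)/yQ\bigr)=\grade_Q(J)$ here requires a Rees-type $\Ext$ computation, not a regular-sequence permutation. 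You flag these as ``the hard part,'' and indeed they are exactly the part left unproved. The paper sidesteps all of this by using Gerko's equivalent definition $3.2'$ of CM-dimension via a flat local extension $R\to R'$ and a semidualizing $R'$-module $C$: since $x$ stays regular on $R'$ and $M\otimes_RR'$ under flatness, $C/xC$ is semidualizing over $R'/xR'$ and Sather-Wagstaff's Prop.~6.3.1 gives the descent of $\gcdim$ directly, with no deformation to lift and no G-perfectness to preserve. I would recommend either adopting that route or supplying complete proofs of the two missing facts above.
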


\begin{proof}
	The equalities in (1) are obvious as $\beta_n^R(M) = \beta_n^{R/xR}(M/xM)$ for every $n\ge 0$, see, e.g., \cite[p.~140, Lem.~2]{Mat86}. The (in)equalities in (2) are well known when H is proj, inj, {\rm CI} and {\rm G}, see \cite[1.3.5]{BH93}, \cite[3.1.15]{BH93}, \cite[(1.12.2)]{AGP97} and \cite[(4.31)]{AB69} respectively. Thus it requires to show the inequality for H = CM.
	
	If $\cmdim_R M = \infty$, then the inequality is trivial. So assume that $\cmdim_R(M)$ is finite and same as $n$. Then there exists a local flat extension $ R \to R' $ and a semidualizing $R'$-module $C$ such that $\gcdim_{R'}(M') = n $, where $M' := M \otimes_R R'$. Since $x$ is regular over both $R$ and $M$, and $ R \to R' $ is flat, $x$ is also regular over $R'$ and $M'$. Hence $C/xC$ is a semidualizing module over $R'/xR'$. Moreover, since $x$ is regular on $M'$, $\Tor^{R'}_1(R'/xR',M') = 0$. Hence, considering the natural surjection $R' \to R'/xR'$, by \cite[Prop.~6.3.1]{SSW01},
	\begin{equation}\label{eqn:GC-dim}
		\gcodim_{R'/xR'}(M' \otimes_{R'} R'/xR') = \gcdim_{R'}(M') = n.
	\end{equation}
	By Lemma~\ref{lem:flat-extension}, $R/xR \to R'/xR'$ is flat. Note that
	\begin{align*}
		M/xM \otimes_{R/xR} R'/xR' &\cong M \otimes_R R/xR \otimes_{R/xR} R'/xR' \cong M \otimes_R R'/xR' \\
		&\cong M \otimes_R R' \otimes_{R'} R'/xR' \cong M' \otimes_{R'} R'/xR'.
	\end{align*}
	So the equality \eqref{eqn:GC-dim} yields that $\cmdim_{R/xR}(M/xM) \le n = \cmdim_R(M)$. 
\end{proof}    

\begin{lemma}\label{lem:s.e.s-CM-dim}
	Let $0 \to M_1 \to M_2 \to M_3 \to 0$ be a short exact sequence of $R$-modules. If one of these $M_i$ has finite projective dimension, and another one has finite CM-dimension, then the 3rd one has finite CM-dimension.
	
	In the above mentioned result, {\rm CM} can be replaced by {\rm CI} as well.
\end{lemma}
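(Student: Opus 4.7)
The plan is to reduce both statements to the standard two-out-of-three property for $G_C$-projective dimension (for the CM case) and for ordinary projective dimension (for the CI case), by passing through a single well-chosen base change.

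For the CM case, suppose $M_i$ has finite CM-dimension. By the alternative definition in \ref{para:CM-defn-2}, I can choose a local flat extension $R \to R'$ and a semidualizing $R'$-module $C$ such that $\gcdim_{R'}(M_i \otimes_R R')$ is finite. Since the extension is flat, tensoring with $R'$ preserves the short exact sequence, giving $0 \to M_1' \to M_2' \to M_3' \to 0$ where $M_k' := M_k \otimes_R R'$. For the term $M_j$ with finite projective dimension, flatness of $R \to R'$ gives $\pd_{R'}(M_j') \le \pd_R(M_j) < \infty$, and any $R'$-module of finite projective dimension has finite $G_C$-dimension with respect to any semidualizing module $C$ (it belongs to the Auslander class, so $\gcdim_{R'}(M_j') \le \pd_{R'}(M_j') < \infty$). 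Hence two of the three terms $M_k'$ have finite $\gcdim_{R'}$ relative to the same semidualizing module $C$.

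The next step invokes the standard two-out-of-three property: in a short exact sequence of $R'$-modules, if two of the three have finite $G_C$-dimension, then so does the third. Applying this to $0 \to M_1' \to M_2' \to M_3' \to 0$ yields $\gcdim_{R'}(M_\ell') < \infty$ for the remaining index $\ell$. Reading this through \ref{para:CM-defn-2} via the same flat extension $R \to R'$ and semidualizing module $C$ gives $\cmdim_R(M_\ell) < \infty$, which is the desired conclusion.

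For the CI-case, the argument is parallel but simpler, using the quasi-deformation definition \ref{defn:CI-dim} directly. Choose the module $M_i$ with finite CI-dimension and fix a quasi-deformation $R \to R' \leftarrow Q$ witnessing it, so that $\pd_Q(M_i \otimes_R R') < \infty$. The module $M_j$ of finite projective dimension gives $\pd_{R'}(M_j \otimes_R R') < \infty$, and since $\pd_Q(R') < \infty$ (the kernel of $Q \twoheadrightarrow R'$ is generated by a regular sequence), the standard inequality $\pd_Q(M_j') \le \pd_{R'}(M_j') + \pd_Q(R')$ shows $\pd_Q(M_j')$ is finite too. Now two of the three terms in the base-changed exact sequence have finite projective dimension over $Q$, so the third does as well, and this is exactly what is needed to conclude finite CI-dimension of the third $M_\ell$ via the same quasi-deformation.

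The main obstacle is really bookkeeping rather than mathematics: one must verify that the same quasi-deformation (or the same pair $(R', C)$ in the CM case) simultaneously witnesses finite homological dimension for two of the three modules, which is why I extract the data from the CM-dim (or CI-dim) hypothesis first and then feed the PD module through the same base change. The underlying two-out-of-three principles for $\pd_Q$ and for $\gcdim$ are both classical.
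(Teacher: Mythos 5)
Your proof is correct and follows essentially the same strategy as the paper's: extract the witnessing base change from the module of finite CM- (resp.\ CI-) dimension, push the finite-projective-dimension module through it, and apply a two-out-of-three lemma; the CI case is verbatim the paper's argument. The only cosmetic difference is that in the CM case the paper uses the G-quasi-deformation definition \ref{para:CM-defn-1} and works with plain G-dimension over $Q$ (so that the two-out-of-three step is the standard \cite[1.2.9]{Ch00}), whereas you use the equivalent formulation \ref{para:CM-defn-2} and need the $G_C$-dimension analogues (finite $\pd$ implies finite $\gcdim$, and two-out-of-three for $\gcdim$), which are true but require a citation to the semidualizing-module literature such as \cite{SSW01}.
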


\begin{proof}
	Consider distinct $l,m,n \in \{1,2,3\}$. Suppose $\pd_R(M_l)$ and $\cmdim_R(M_m)$ are finite. We need to show that $\cmdim_R(M_n)<\infty$. Since $\cmdim_R(M_m)<\infty$, there exists a G-quasi-deformation $R \to R' \leftarrow Q$ such that $\gdim_Q(M_m')< \infty$, where $(-)':=(-)\otimes_R R'$. Since $R\to R'$ is flat, $\pd_{R'}(M_l')< \infty$. Since $R' \leftarrow Q$ is surjective, and its kernel is a G-perfect ideal of $Q$ (cf.~\ref{para:CM-defn-1}), one obtains that $\gdim_Q(R')< \infty$. From $\pd_{R'}(M_l')< \infty$ and $\gdim_Q(R')< \infty$, applying \cite[1.2.9]{Ch00} to an $R'$-resolution of $M'$, one derives that $\gdim_Q(M_l')< \infty$. Therefore, again by \cite[1.2.9]{Ch00}, in view of the short exact sequence $0\to M_1' \to M_2'\to M_3' \to 0$, we get that $\gdim_Q(M_n')< \infty$. Thus  $\cmdim_R(M_n)<\infty$. Considering quasi-deformation (in place of G-quasi-deformation), a similar argument works for the result on CI-dimension.
\end{proof}

\begin{lemma}\label{lem:CM-direct-sum}
	Let $M$ be a submodule of a free $R$-module $F$. Let {\rm H} denotes {\rm CI} and {\rm CM} respectively. Then the following are equivalent: {\rm (1)} $\hdim_R(M \oplus F/M) <\infty$, {\rm (2)} $\hdim_R(M)<\infty$, and {\rm (3)} $\hdim_R(F/M)<\infty$. In particular,
	\[
		\hdim_R(M \oplus F/M) = \hdim_R(F/M).
	\]
\end{lemma}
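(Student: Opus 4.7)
The plan is to exploit the short exact sequence
\[
0 \to M \to F \to F/M \to 0,
\]
using that $F$ is free and hence $\pd_R(F)=0$. Applying Lemma~\ref{lem:s.e.s-CM-dim} with $F$ playing the role of the module of finite projective dimension gives the equivalence (2) $\Longleftrightarrow$ (3) at once, for both {\rm H} $=$ {\rm CI} and {\rm H} $=$ {\rm CM}. The implication (1) $\Longrightarrow$ (2) is immediate from \ref{para:H-dim-direct-sum}, which gives $\hdim_R(M \oplus F/M) \ge \hdim_R(M)$, so finiteness of the left-hand side forces that of $\hdim_R(M)$.

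The substantive step is (3) $\Longrightarrow$ (1). Assuming $\hdim_R(F/M)<\infty$, I would fix a (G-)quasi-deformation $R \to R' \leftarrow Q$ witnessing this, and write $(-)' := (-)\otimes_R R'$. Base change produces an exact sequence $0 \to M' \to F' \to (F/M)' \to 0$ by flatness of $R \to R'$. Because $R' \leftarrow Q$ is a (G-)deformation, $\pd_Q(R')$ (resp.~$\gdim_Q(R')$) is finite, and since $F'\cong (R')^n$ is free over $R'$, so is $\pd_Q(F')$ (resp.~$\gdim_Q(F')$). The two-of-three principle for projective dimension (resp.~G-dimension, \cite[1.2.9]{Ch00}), applied exactly as in the proof of Lemma~\ref{lem:s.e.s-CM-dim}, then forces the corresponding dimension of $M'$ to be finite, and hence that of $M' \oplus (F/M)'$. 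By the definitions in \ref{defn:CI-dim} and \ref{para:CM-defn-1} this gives $\hdim_R(M\oplus F/M) < \infty$.

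For the sharper equality $\hdim_R(M\oplus F/M) = \hdim_R(F/M)$, I plan to invoke the Auslander--Buchsbaum-type formula from \ref{para:H-dim-inequalities}: once finite, $\hdim_R(X) = \depth(R) - \depth(X)$. Since $\depth(M\oplus F/M) = \min\{\depth M,\,\depth(F/M)\}$, the desired equality reduces to showing $\depth M \ge \depth(F/M)$. This follows from the depth lemma applied to the sequence above, which yields $\depth M \ge \min\{\depth R,\,\depth(F/M)+1\}$, together with the fact that finiteness of $\hdim_R(F/M)$ forces $\depth(F/M) \le \depth R$; a short case split according to whether $\depth(F/M)+1 \le \depth R$ or not then closes the argument.

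The main obstacle I anticipate is the implication (3) $\Longrightarrow$ (1): one must avoid having to construct a common (G-)quasi-deformation for both $M$ and $F/M$, and the device that makes this unnecessary is that $F$ is free, so that $F'$ is free over $R'$ and thereby inherits finite (G-)projective dimension over $Q$ automatically from the deformation. Once this base-change argument is in hand, the remainder of the lemma is a routine combination of the depth lemma with the Auslander--Buchsbaum-type formula built into the very definitions of {\rm CI}- and {\rm CM}-dimension.
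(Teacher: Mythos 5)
Your proposal is correct and follows essentially the same route as the paper: the equivalence of (2) and (3) via Lemma~\ref{lem:s.e.s-CM-dim}, the two-of-three argument over a single (G-)quasi-deformation using freeness of $F'$ to get finiteness for the direct sum, and the Auslander--Buchsbaum-type formula from \ref{para:H-dim-inequalities} for the final equality. The only cosmetic differences are that you start the quasi-deformation argument from (3) rather than (2), and you establish $\hdim_R(M)\le\hdim_R(F/M)$ by a depth-lemma case split where the paper simply cites $M=\Omega_1^R(F/M)$ and \ref{para:H-dim-syz-relations}.
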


\begin{proof}
	We prove the lemma for H = CM. The case H = CI can be shown in a similar way. The implications (1) $\Rightarrow$ (2) and (1) $\Rightarrow$ (3) can be obtained from \ref{para:H-dim-direct-sum}. The equivalence of (2) and (3) follows from the short exact sequence $0\to M \to F \to F/M\to 0$ and Lemma~\ref{lem:s.e.s-CM-dim}. So it is enough to prove that (2) $\Rightarrow$ (1). Let $\cmdim_R(M)<\infty$. Then there exists a G-quasi deformation $R \to R' \leftarrow Q$ such that $\gdim_Q(M')<\infty$, where $(-)':=(-)\otimes_R R'$. Since $R\to R'$ is flat, the sequence  $0\to M' \to F' \to (F/M)' \to 0$ is also exact. From the G-quasi deformation, one has that $\gdim_Q(R')< \infty$.  Therefore, by \cite[1.2.7 and 1.2.9]{Ch00}, $\gdim_Q(F/M)'< \infty$, and hence $\gdim_Q(M \oplus F/M)'< \infty$. Thus $\cmdim_R(M \oplus F/M)<\infty$. This completes the proof of the first part. For the second part, we may assume that both $\hdim_R(M \oplus F/M)$ and $ \hdim_R(F/M)$ are finite. In view of \ref{para:H-dim-inequalities}, since $\depth(M \oplus F/M) = \min\{\depth(M),\depth(F/M)\}$, one obtains that $\hdim_R(M \oplus F/M) = \max\{\hdim(M),\hdim(F/M)\} = \hdim(F/M)$. The last equality follows from \eqref{para:H-dim-syz-relations} as $M=\Omega_1^R(F/M)$.
\end{proof}

\begin{lemma}\label{lem:local-flat}
	Let $(R',\fm')$ be a Noetherian local ring, and $\varphi: R \to R'$ be a local homomorphism. Let $ \Phi : R[X]_{\langle \fm, X \rangle} \to R'[X]_{\langle \fm',X \rangle}$ be the map induced by $\varphi$. 
	\begin{enumerate}[\rm (1)]
		\item If $\varphi$ is flat $($resp., local, surjective$)$, then so is $\Phi$.
		\item $\Ker(\Phi) = \langle \Ker(\varphi) \rangle$.
		\item Every quasi-deformation $R \to R' \leftarrow Q$ induces another quasi-deformation $  R[X]_{\langle \fm, X \rangle} \to R'[X]_{\langle \fm',X \rangle} \leftarrow Q[X]_{\langle \fn, X \rangle}$, where $\fn$ is the maximal ideal of $Q$.
	\end{enumerate}
\end{lemma}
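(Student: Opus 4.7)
The plan is to handle parts (1) and (2) directly and deduce (3) by assembling them. The crucial observation underlying all three parts is that since $\varphi$ is local, for any polynomial $g \in R[X]$ with constant term outside $\fm$, its image $\varphi(g)\in R'[X]$ has constant term outside $\fm'$; and in the local ring $R'[X]_{\langle\fm',X\rangle}$ (with maximal ideal $\langle\fm',X\rangle$), any such polynomial is a unit, since the standard $1+(\text{max ideal})$ argument applies.

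For part (1), I would use the identification $R'[X]\cong R[X]\otimes_R R'$ to transfer flatness (resp., surjectivity) from $\varphi$ to $R[X]\to R'[X]$. The composition $R[X]\to R'[X]\to R'[X]_{\langle\fm',X\rangle}$ then inverts every element of $R[X]\setminus\langle\fm,X\rangle$ by the preceding observation, so by the universal property of localization it factors uniquely through $R[X]_{\langle\fm,X\rangle}$, yielding $\Phi$; this factorization preserves flatness by base change and further localization. Locality of $\Phi$ is immediate from $\varphi(\fm)\subseteq\fm'$. For surjectivity of $\Phi$ when $\varphi$ is surjective, locality again lets one lift any denominator $g\in R'[X]\setminus\langle\fm',X\rangle$ coefficient-wise to some $G\in R[X]\setminus\langle\fm,X\rangle$, and then $F/G\mapsto f/g$ for any numerator lift $F$ of $f$.

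For part (2), the inclusion $\langle\Ker\varphi\rangle\subseteq\Ker\Phi$ is immediate. Conversely, given $f/g\in\Ker\Phi$ with $g\in R[X]\setminus\langle\fm,X\rangle$, the element $\varphi(g)$ is already a unit in $R'[X]_{\langle\fm',X\rangle}$, so $\Phi(f/g)=0$ forces $\varphi(f)=0$ there. This produces some $u\in R'[X]\setminus\langle\fm',X\rangle$ with $u\cdot\varphi(f)=0$ in $R'[X]$. Since $u$ has unit constant term in $R'$, comparing coefficients of successive powers of $X$ in this identity forces each coefficient of $\varphi(f)$ to vanish in turn, giving $\varphi(f)=0$ in $R'[X]$ itself. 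Hence $f\in\Ker(\varphi)\cdot R[X]$, so $f/g$ lies in the extension of $\Ker\varphi$ to $R[X]_{\langle\fm,X\rangle}$.

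Part (3) then assembles as follows: applying (1) to the flat map $R\to R'$ yields the flat local arrow $R[X]_{\langle\fm,X\rangle}\to R'[X]_{\langle\fm',X\rangle}$, and applying (1) to the surjective local map $Q\to R'$ yields the surjective local arrow $Q[X]_{\langle\fn,X\rangle}\to R'[X]_{\langle\fm',X\rangle}$. By (2), the kernel of the latter equals $(y_1,\ldots,y_c)\cdot Q[X]_{\langle\fn,X\rangle}$, where $(y_1,\ldots,y_c)$ is the $Q$-regular sequence generating $\Ker(Q\to R')$. Flatness of $Q\to Q[X]_{\langle\fn,X\rangle}$ together with containment in the maximal ideal $\langle\fn,X\rangle$ ensures this sequence remains regular in $Q[X]_{\langle\fn,X\rangle}$, giving a codimension-$c$ deformation. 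I expect the only genuine subtlety to be the passage in (2) from vanishing in $R'[X]_{\langle\fm',X\rangle}$ back to vanishing in $R'[X]$ via the unit-constant-term argument; the remainder is a matter of assembling standard base-change, localization, and regular-sequences-under-flat-extension facts.
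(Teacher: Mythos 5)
Your proposal is correct and follows essentially the same route as the paper's proof: flatness of $\Phi$ via base change $R'[X]\cong R[X]\otimes_R R'$ followed by localization, the kernel computation by comparing coefficients of powers of $X$ against a multiplier with unit constant term, and part (3) by combining (1) and (2) with the fact that a $Q$-regular sequence stays regular under the flat local map $Q\to Q[X]_{\langle\fn,X\rangle}$. The only cosmetic difference is that you treat a general element $f/g$ of the kernel while the paper reduces to $f/1$ (the denominator being a unit anyway), so there is nothing substantive to add.
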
  

\begin{proof}
	(1) If $\varphi$ is local (resp., surjective), then by the construction, $\Phi$ is so. Suppose $\varphi$ is flat. Then, since $R'[X]\otimes_{R[X]}(-)\cong R'\otimes_R R[X]\otimes_{R[X]}(-)$, the induced map $R[X] \to R'[X] $ is also flat. Since localization is flat, and the composition of two flat homomorphisms is flat, the homomorphism $R[X] \to R'[X]_{\langle \fm',X\rangle}$ is flat. Therefore, for any module $L$ over $R[X]_{\langle \fm,X\rangle}$, in view of the isomorphisms
	\begin{align*}
		&R'[X]_{\langle \fm',X \rangle} \otimes_{R[X]_{\langle \fm,X\rangle}} L \\
		& \cong \left( R'[X]_{\langle \fm',X \rangle} \otimes_{R[X]} R[X]_{\langle \fm,X\rangle} \right) \otimes_{R[X]_{\langle \fm,X\rangle}} \left( R[X]_{\langle \fm,X\rangle} \otimes_{R[X]} L \right) \\
		& \cong R'[X]_{\langle \fm',X \rangle} \otimes_{R[X]} R[X]_{\langle\fm,X\rangle} \otimes_{R[X]} L,
	\end{align*}
	one obtains that $\Phi$ is flat.
	
	(2) The containment $\langle \Ker(\varphi) \rangle \subseteq \Ker(\Phi)$ is trivial. For other containment, consider an element $y= (a_0+a_1X+a_2 X^2+ \cdots +a_m X^m)/1$ of $\Ker(\Phi)$. Then $\phi(a_0)+\phi(a_1)X+\cdots +\phi(a_m)X^m =0$ in $R'[X]_{\langle \fm',X\rangle}$. Therefore, by forward induction on $i=0,1,\dots,m$, one concludes that $t\varphi(a_i)=0$ for some $t \in R'\smallsetminus \fm'$, and hence $\varphi(a_i)=0$ for every $i=0,1,\dots,m$. Thus $y \in \langle \Ker(\varphi) \rangle$.
	
	(3) Note that $Q\to Q[X]_{\langle \fn, X \rangle}$ is local flat. Hence every $Q$-regular sequence is also $Q[X]_{\langle \fn, X \rangle}$-regular. Therefore, in view of the definition of quasi-deformation \ref{defn:CI-dim}, the statement (3) follows from (1) and (2).
\end{proof}

\begin{lemma}\label{lem:CM-dim-relation-mod-x-R-S}
	Let $S=R[X]_{\langle \fm, X \rangle}$. Let $M$ be an $S$-module such that $X$ is $M$-regular. Then $\hdim_S(M) \le \hdim_R(M/XM)$ where $\rm H$ is $\rm CM$ and $\rm CI$ respectively.
\end{lemma}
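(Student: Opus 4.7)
The plan is to lift a (G-)quasi-deformation for $M/XM$ over $R$ to one for $M$ over $S$ via polynomial-then-localization extension, and then to compare the relevant dimensions by cutting back down with $X$, which remains regular throughout.

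I first dispose of the trivial case $\hdim_R(M/XM)=\infty$. Assuming finiteness, I would pick a quasi-deformation (for $\rm H=CI$) or a G-quasi-deformation (for $\rm H=CM$) $R \to R' \leftarrow Q$ witnessing $\hdim_R(M/XM)$, and write $(-)':=(-)\otimes_R R'$. Lemma~\ref{lem:local-flat}(3) directly produces, in the CI case, a quasi-deformation
$$S \to S' \leftarrow T, \quad S := R[X]_{\langle \fm, X\rangle},\ S' := R'[X]_{\langle \fm', X\rangle},\ T := Q[X]_{\langle \fn, X\rangle}.$$
In the CM case I would repeat the proof of Lemma~\ref{lem:local-flat}(3), substituting ``G-perfect'' for ``generated by a regular sequence'': if $I=\ker(Q\to R')$ is G-perfect in $Q$, then a flat base-change argument along the faithfully flat local extension $Q\to T$ (whose closed fiber $k(\fn)[X]_{(X)}$ is a DVR, hence Gorenstein) yields both $\gdim_T(T/IT)=\gdim_Q(Q/I)$ and $\grade_T(T/IT)=\grade_Q(Q/I)$, so that $IT$ is G-perfect in $T$.

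Setting $\tilde{M} := M \otimes_S S'$, flatness of $S \to S'$ preserves the $M$-regularity of $X$, while $X$ is obviously $T$-regular and $S'$-regular. The key identifications are $T/XT \cong Q$, $S'/XS' \cong R'$, and $\tilde{M}/X\tilde{M} \cong (M/XM)'$. Invoking the equality parts of Lemma~\ref{lem:H-dim-M-M//xM}(2) for projective and G-dimension (applied over $T$ with regular element $X$) yields
$$\pd_T(\tilde{M}) = \pd_Q((M/XM)'),\quad \gdim_T(\tilde{M}) = \gdim_Q((M/XM)'),$$
and likewise $\pd_T(S') = \pd_Q(R')$ and $\gdim_T(S') = \gdim_Q(R')$. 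Subtracting and taking the infimum over all (G-)quasi-deformations of $R$ delivers $\hdim_S(M) \le \hdim_R(M/XM)$ for $\rm H \in \{CI, CM\}$.

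The only non-routine ingredient I anticipate is the CM-case upgrade of Lemma~\ref{lem:local-flat}(3), i.e., the assertion that the extension $Q\to T$ preserves G-perfectness of ideals; this boils down to standard flat-base-change identities for $\gdim$ and $\grade$ along a local flat map with Gorenstein fiber. Everything else reduces to a direct application of lemmas already established earlier in the paper.
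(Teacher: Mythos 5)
Your argument is correct, and for the CM half it takes a genuinely different route from the paper. For CI the two proofs agree up to the last step: both lift the quasi-deformation via Lemma~\ref{lem:local-flat}(3); the paper then transfers $\pd$ along the faithfully flat local map $Q\to T$ using the isomorphism $(M\otimes_S R')\otimes_Q T\cong M\otimes_S S'$, whereas you cut back down by $X$ over $T$ using the equality case of Lemma~\ref{lem:H-dim-M-M//xM}(2) together with $T/XT\cong Q$ and $\tilde M/X\tilde M\cong (M/XM)'$ --- equivalent in substance. For CM the paper deliberately \emph{avoids} lifting the G-quasi-deformation: it switches to Gerko's alternative characterization \ref{para:CM-defn-2} of $\cmdim$ via semidualizing modules over local flat extensions, so the work consists of checking that $C\otimes_{R'}S'$ is semidualizing over $S'$ and that $G_C$-projectives base change to $G_{C\otimes_{R'}S'}$-projectives; no auxiliary ring $T$ appears. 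You instead stay with Definition~\ref{para:CM-defn-1} and must upgrade Lemma~\ref{lem:local-flat}(3) by showing that $IT$ remains G-perfect in $T=Q[X]_{\langle\fn,X\rangle}$. You correctly isolate this as the one non-routine point, and your sketch does close: flat base change gives $\Ext_T^i(T/IT,T)\cong\Ext_Q^i(Q/I,Q)\otimes_Q T$, so faithful flatness yields $\grade_T(IT)=\grade_Q(I)$, while base-changing a finite $G$-resolution gives $\gdim_T(T/IT)\le\gdim_Q(Q/I)$, and the chain $\gdim_T(T/IT)\le\gdim_Q(Q/I)=\grade_Q(I)=\grade_T(IT)\le\gdim_T(T/IT)$ forces equality (the Gorenstein-fiber remark is not actually needed). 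Your version has the merit of keeping the CI and CM cases structurally parallel; the paper's has the merit of not requiring any statement about ascent of G-perfectness. Both are valid.
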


\begin{proof}
	Consider the case when $\rm H=CM$. We may assume that $\cmdim_R(M/XM)$ is finite, say $ n $. Then there exists a local flat homomorphism $R \to R'$ and a semidualizing $R'$-module $C$ such that $\gcdim_{R'}(M/XM \otimes_{R}R')=n$. Let $\fm'$ be the maximal ideal of $R'$. Set $S' := R'[X]_{\langle \fm',X \rangle}$. Being composition of two flat extensions $R' \to R'[X]$ and $R'[X] \to S'$, the extension $R' \to S'$ is flat. Therefore, for any two $R'$-modules $U$ and $V$,
		\[
		 \Ext_{R'}^i(U,V) \otimes_{R'}S'  \cong \Ext_{S'}^i( U \otimes_{R'}S',V \otimes_{R'}S') \quad \mbox{for every } i \ge 0.
		\]
	It follows that $C \otimes_{R'}S'$ is a semidualizing module over $S'$. Moreover, for every $G_C$-projective $R'$-module $L$, the module $L\otimes_{R'}S'$ is $G_{C \otimes_{R'}S'}$-projective over $S'$. Note that $R' \to S'$ is flat and local, hence faithfully flat. Since $\gcdim_{R'}(M/XM \otimes_{R}R') = n $, there exists a $G_C$-projective resolution of $(M/XM \otimes_{R}R')$ over $R'$ of length $n$. Applying $(-)\otimes_{R'}S'$ to this resolution, one obtains a $G_{C \otimes_{R'}S'}$-projective resolution of $ M/XM \otimes_{R}R'\otimes_{R'}S'$ over $S'$ of length $n$.
	Thus, since
		\begin{align*}
			M/XM \otimes_{R} R'\otimes_{R'}S' &\cong (M\otimes_{S} S/XS)\otimes_{R}R' \otimes_{R'}S' \\ &\cong M\otimes_{S}(R \otimes_{R}R')\otimes_{R'}S'  \\ &\cong M \otimes_{S} (R' \otimes_{R'}S') \cong M\otimes_{S} S',
		\end{align*}	
	it follows that ${\rm G}_{(C \otimes_{R'}S')}\mbox{-}{\rm dim}_{S'} ( M\otimes_{S} S') \le n$. Therefore, since $S \to S'$ is a local flat homomorphism (by Lemma~\ref{lem:local-flat}), one gets that $\cmdim_S(M) \le n$.
	
	It remains to prove the statement when $\rm H=CI$. Let $\cid_R(M/XM)=n < \infty$. Then there exists a quasi-deformation $R \stackrel{\varphi}\longrightarrow R' \stackrel{\psi}\longleftarrow Q$ of $R$ such that $\pd_Q(M/XM \otimes_{R}R')=n+\pd_Q(R')$. By Lemma~\ref{lem:local-flat},  $S \stackrel{\Phi}\longrightarrow S' \stackrel{\Psi}\longleftarrow T$ is a quasi-deformation of $S$, and $(\Ker(\psi))T = \Ker(\Psi)$, where $S'=R'[X]_{\langle \fm', X \rangle}$ and $T=Q[X]_{\langle \fn, X \rangle}$, and $\fm'$ and $\fn$ are the maximal ideals of $R'$ and $Q$ respectively. Since 
	\begin{align*}
		M/XM \otimes_{R} R' \cong M \otimes_{S} S/XS \otimes_{R} R' \cong M \otimes_{S} R \otimes_{R} R' \cong M \otimes_{S} R',
	\end{align*}
	it follows that $\pd_Q(M \otimes_{S}R')=n+\pd_Q(R')$. Therefore, in view of
	\begin{align*}
		(M \otimes_{S} R') \otimes_{Q} T &\cong M \otimes_{S} (R' \otimes_{Q} T) \cong M \otimes_{S}(Q/\Ker(\psi) \otimes_{Q}T)\\
		& \cong M \otimes_{S}T/(\Ker(\psi))T \cong M \otimes_{S}T/\Ker(\Psi) \cong M \otimes_{S}S',
	\end{align*}
	since $Q \to T$ is flat and local, one obtains that $\pd_{T}(M \otimes_{S}S')=n+\pd_{T}(S')$. Hence $\cid_{S}(M) \le n=\cid_R(M/XM)$.
\end{proof}

\section{Burch submodules and their quotients}\label{sec:Burch-ideals}

The notion of $\fm$-full ideals was introduced by D.~Rees (unpublished), while weakly $\fm$-full and Burch ideals were introduced in \cite[3.7]{CIST18} and \cite[2.1]{DKT20} respectively. These are extended to submodules in \cite[2.1]{AP05}, \cite[4.1]{DK23} and \cite[3.1]{DK23} respectively.

\begin{definition}\cite[Def.~3.1]{DK23}\label{defn:Burch-submodule}
	Let $L$ be an $R$-module. A submodule $M$ of $L$ is said to be Burch if $\fm(M:_L \fm) \neq \fm M$, i.e., $\fm(M:_L \fm) \not\subseteq \fm M$. A Burch submodule of $R$ is called a Burch ideal of $R$, which is due to \cite[Def.~2.1]{DKT20}.
\end{definition}   

\begin{remark}\label{rmk:Burch-submodules}
	Let $M$ be a Burch submodule of some $R$-module $L$. Then, by the definition, $M\neq 0$ and $L/M\neq 0$. Moreover, by \cite[Lem.~3.3]{DK23}, $\depth(L/M)=0$.
\end{remark}

\begin{definition}(\cite[2.1]{AP05} and \cite[4.1]{DK23})
	A submodule $M$ of $L$ is called:
	\begin{enumerate}[\rm (1)]
		\item $\fm$-full if $(\fm M:_L x)=M$ for some $x\in \fm$.
		\item weakly $\fm$-full if $(\fm M:_L \fm)=M$.
	\end{enumerate}
\end{definition}

\begin{remark}\label{rmk:strata}
	 \cite[pp.~13]{DK23}
	A submodule $M$ of an $R$-module $L$ is
	\[
	\mbox{$\fm$-full} \;\; \Longrightarrow \;\; \mbox{weakly $\fm$-full} \;\; \stackrel{(*)}{\Longrightarrow} \;\; \mbox{Burch},
	\]
	where $\depth(L/M)$ is assumed to be zero for $(*)$.
\end{remark}

\begin{example}\label{exam:Burch-submodules-quotient}
	\cite[3.5]{DK23} If $N$ is a submodule of some $R$-module $L$ such that $\fm N \neq 0$, then $M:=\fm N$ is a Burch submodule of $L$.
\end{example}

We show that a large class of integrally closed ideals are Burch ideals under some mild conditions on the ring.

\begin{proposition}\label{prop:int-closed-ideal-Burch}
	Suppose that $k$ is infinite, and $R$ is not a field. Let $I$ be an integrally closed ideal of $R$ such that $\depth(R/I)=0$. Then the ideal $I$ is Burch.
\end{proposition}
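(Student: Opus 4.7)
The plan is to reduce the proposition to Goto's classical theorem that integrally closed ideals are $\fm$-full over local rings with infinite residue field, and then combine this with the chain of implications in Remark~\ref{rmk:strata}. More precisely, I would first show that $I$ is $\fm$-full, and then upgrade this to the Burch condition using $\depth(R/I)=0$.

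The key input is Goto's theorem: over a Noetherian local ring with infinite residue field, every nonzero integrally closed proper ideal is $\fm$-full, i.e., there exists $x \in \fm$ (a suitably generic/superficial element, whose existence uses $k$ infinite) with $(\fm I :_R x) = I$. The hypothesis ``$R$ is not a field'' ensures $\fm \neq 0$, and $\depth(R/I)=0$ forces $I$ to be proper. The case $I=0$ must be excluded implicitly, since otherwise $\fm(I:_R\fm) = \fm \cdot \Soc(R) = 0 = \fm I$ and the Burch condition already fails.

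Granting such an $x$, the conclusion is immediate. Since $\fm \in \Ass(R/I)$ by the depth hypothesis, pick $z \in (I :_R \fm) \smallsetminus I$; then $xz \in I$ (as $x \in \fm$ and $\fm z \subseteq I$). If $xz$ were in $\fm I$, then $z$ would lie in $(\fm I :_R x) = I$, a contradiction. Hence $xz \in \fm(I :_R \fm) \smallsetminus \fm I$, which gives $\fm(I :_R \fm) \neq \fm I$, i.e., $I$ is Burch. The only nontrivial step is Goto's theorem; everything else is a short colon manipulation. One could alternatively invoke the implication ``weakly $\fm$-full plus $\depth(R/I)=0 \Rightarrow$ Burch'' from Remark~\ref{rmk:strata} directly, since $\fm$-full is stronger than weakly $\fm$-full.
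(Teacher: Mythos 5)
Your overall strategy is the same as the paper's (reduce to $\fm$-fullness via Goto, then use $\depth(R/I)=0$ to pass from $\fm$-full to Burch), and your second step is fine: the colon manipulation producing $xz\in\fm(I:_R\fm)\smallsetminus\fm I$ is exactly the content of \cite[Cor.~2.4]{DKT20}, which the paper simply cites. The problem is the first step. Goto's Theorem~2.4 does \emph{not} say that every nonzero proper integrally closed ideal over a local ring with infinite residue field is $\fm$-full; it is a dichotomy: either $I$ is $\fm$-full or $I=\sqrt{0}$, the nilradical (which is always integrally closed). By quoting the unconditional version you silently discard the exceptional branch, and your argument says nothing in the case $I=\sqrt{0}$. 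That case is not obviously vacuous under your hypotheses, and the paper devotes half of its proof to it: if $I=\sqrt{0}$, then $\depth(R/I)=0$ yields $x\notin I$ with $\fm x\subseteq I$; if $x\in\fm$ then $x^2\in I=\sqrt{0}$ forces $x$ to be nilpotent, hence $x\in\overline{I}=I$, a contradiction; so $x$ is a unit, $\fm\subseteq I$, and $I=\fm$, which is Burch by \cite[2.2.(2)]{DKT20}. You need this (or some argument that the exceptional case cannot arise) to close the gap.

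Two smaller points. First, your treatment of $I=0$ is backwards: the case is not ``implicitly excluded'' --- it simply cannot satisfy the hypotheses, since $(0)$ is integrally closed only when $R$ is reduced, and a reduced local ring with $\depth(R)=\depth(R/0)=0$ is a field, which is excluded. You should verify this rather than suggest the statement needs an unstated extra hypothesis. Second, once you know $I$ is $\fm$-full, your direct argument and the alternative route through Remark~\ref{rmk:strata} (weakly $\fm$-full plus $\depth(R/I)=0$ implies Burch) are both fine and match the paper.
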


\begin{proof}
	By virtue of \cite[Thm.~2.4]{Go87}, either $I$ is $\fm$-full or $I=\sqrt{0}$. If $I$ is $\fm$-full, since $\depth(R/I)=0$, by \cite[Cor.~2.4]{DKT20}, $I$ is Burch. So we may assume that $I = \sqrt{0}$. Then $I^n = 0 $ for some $n\ge 1$. Since $\depth(R/I)=0$, there exists $x \notin I$ such that $\fm x \subseteq I$. If $x\in \fm$, then $x^2\in I$, hence $x^{2n}=0$, which implies that $x \in \overline{I}=I$, a contradiction. Thus $x$ is a unit. It follows from $\fm x \subseteq I$ that $I=\fm$, which is also a Burch ideal of $R$ (by \cite[2.2.(2)]{DKT20}).
\end{proof}

\begin{remark}\label{rmk:DKT20-example-not-true}
	The statement given in \cite[Ex.~2.2.(4)]{DKT20} is not true in general. Indeed, the condition that $\depth(R/I)=0$ in Proposition~\ref{prop:int-closed-ideal-Burch} cannot be omitted as remarked in \ref{rmk:Burch-submodules} even when $\depth(R)\ge 1$. Consider $R=k[[x,y,z,w]]/(xy,yz,zx,w^2)$ over an (infinite) field $k$. Then $R$ is a CM local ring of dimension $1$. Set $I:=(w)$. Then $I$ is an integrally closed ideal of $R$ and $\depth(R/I)=1$. Since $\fm(I :_R \fm) = \fm I$, the ideal $I$ is not Burch.
\end{remark}

The following lemma is crucial in order to prove our results on Burch submodules of depth zero.

\begin{lemma}\label{lem:Extension-of-Burch-submodule}
	Let $S=R[X]_{\langle \fm, X \rangle}$. Let $M$ be a Burch submodule of an $R$-module $L$. Set $L':=L[X]_{\langle \fm, X \rangle}$ and $M':=(M+XL[X])_{\langle \fm, X \rangle}$. Then 
	\begin{enumerate} [\rm (1)] 
		\item $R$ is regular $($resp., {\rm CI}, Gorenstein and {\rm CM}$)$ \iff $S$ is so.
		\item $M'$ is a Burch submodule of the $S$-module $L'$.
		\item $M'/XM' \cong M \oplus L/M$ as $R$-modules.
		\item When $L$ is free, the following equalities hold true.
		\begin{enumerate}[\rm (i)]
			\item $\hdim_R(L/M)=\hdim_S(M')$, where {\rm H} can be {\rm proj}, {\rm CI}, {\rm G} and {\rm CM}.
			\item $\cx_R(M)=\cx_S(M')$ and $\curv_R(M)=\curv_S(M')$.
		\end{enumerate}
		\item $\cx_R(k)=\cx_S(k)$ and $\curv_R(k)=\curv_S(k)$.
	\end{enumerate}	
\end{lemma}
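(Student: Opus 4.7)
The plan is to prove the five parts sequentially, each building on the previous. For (1), observe that $R \to S$ is flat and local with closed fiber $S/\fm S \cong k[X]_{(X)}$, a regular local ring; the four ring-theoretic properties (regular, CI, Gorenstein, CM) then transfer in both directions by standard ascent along flat maps with regular fibers and faithfully flat descent.

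For (2), I take witnesses $u \in (M :_L \fm)$ and $a \in \fm$ with $au \notin \fm M$ supplied by the Burch hypothesis. Since $\fm u \subseteq M \subseteq M'$ and $Xu \in XL[X] \subseteq M'$, the element $u$ lies in $(M' :_{L'} \fm_S)$. To verify $au \notin \fm_S M'$, I use the inclusion $\fm_S M' \subseteq (\fm M + XL[X])_{\langle\fm,X\rangle}$; a hypothetical membership would, upon clearing a denominator $s \in R[X] \setminus \langle\fm,X\rangle$ (whose constant term is a unit in $R$) and extracting the $X^0$ coefficient, force $au \in \fm M$, a contradiction. For (3), the internal direct sum decomposition $M + XL[X] = M \oplus XL[X]$ in $L[X]$ yields an $R[X]$-linear surjection $m + Xf \mapsto (m, \overline{f(0)})$ onto $M \oplus L/M$ with kernel $X(M + XL[X])$; localizing at $\langle\fm, X\rangle$ delivers $M'/XM' \cong M \oplus L/M$, the target being already $R$-local because $M$ and $L$ are finitely generated.

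For (4)(i), $X$ is $L[X]$-regular and hence $M'$-regular, so Lemma~\ref{lem:H-dim-M-M//xM}(2) gives $\hdim_R(M'/XM') \le \hdim_S(M')$ with equality when H is proj or G. Combining (3) with the fact that $L$ free forces $M = \Omega_1^R(L/M)$, the identity $\hdim_R(M \oplus L/M) = \hdim_R(L/M)$ follows from \ref{para:H-dim-direct-sum} and \ref{para:H-dim-syz-relations} for H = proj, G and from Lemma~\ref{lem:CM-direct-sum} for H = CI, CM. This settles H = proj and G immediately; for H = CI, CM, Lemma~\ref{lem:CM-dim-relation-mod-x-R-S} supplies the reverse inequality $\hdim_S(M') \le \hdim_R(M'/XM') = \hdim_R(L/M)$. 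Statement (4)(ii) is analogous but shorter: Lemma~\ref{lem:H-dim-M-M//xM}(1) together with (3) and \ref{para:H-dim-direct-sum} give $\cx_S(M') = \cx_R(M \oplus L/M) = \max\{\cx_R(M), \cx_R(L/M)\}$, and the syzygy equality $\cx_R(M) = \cx_R(L/M)$ from \ref{para:H-dim-syz-relations} closes the argument, with the same reasoning applying to $\curv$.

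Finally, (5) will follow from (4)(ii) applied to $L = R$ and $M = \fm$: when $R$ is not a field, $\fm$ is Burch in $R$ as $(\fm :_R \fm) = R$ but $\fm \ne \fm^2$, and $M' = (\fm + XR[X])_{\langle\fm,X\rangle}$ coincides with the maximal ideal $\fm_S$ of $S$, so \ref{para:H-dim-syz-relations} transports the equality from $\fm = \Omega_1^R(k)$ and $\fm_S = \Omega_1^S(k)$ to the residue field; if $R$ is a field then $S$ is a DVR and both invariants vanish for $k$. The main obstacle I foresee is (4)(i), where four homological dimensions must be handled via two distinct combinations of Lemmas~\ref{lem:H-dim-M-M//xM} and \ref{lem:CM-dim-relation-mod-x-R-S}; the verification in (2) also requires some care in tracking $X$-adic coefficients through the localization.
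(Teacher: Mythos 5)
Your proposal is correct and follows essentially the same route as the paper: identical witnesses $a,y$ for (2), the same decomposition $M+XL[X]=M\oplus XL[X]$ with kernel $X(M+XL[X])$ for (3), and the same split of (4) into the proj/G case (equality in Lemma~\ref{lem:H-dim-M-M//xM}) versus the CI/CM case (combining Lemmas~\ref{lem:H-dim-M-M//xM}, \ref{lem:CM-dim-relation-mod-x-R-S} and \ref{lem:CM-direct-sum}). The only departures are cosmetic: you prove (1) by flat ascent/descent along $R\to S$ where the paper uses that $X\in\fn\smallsetminus\fn^2$ is $S$-regular, and you derive (5) internally from (4)(ii) applied to $\fm\subseteq R$ where the paper simply cites \cite[Lem.~3.3]{DG22}.
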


\begin{proof}
	(1) Note that the maximal ideal of $S$ is given by $\fn= \fm S + XS$. The assertions in (1) follow from the fact that $X\in\fn\smallsetminus\fn^2$ is $S$-regular.
	
	(2) Since $\fm(M:_L \fm) \neq \fm M$, there exist $a \in \fm$ and $y \in (M:_L \fm)$ such that $ay \notin \fm M $. Then $ \frac{a}{1} \in \fn$ and $\frac{y}{1} \in (M':_{L'}\fn)$. Since $a \in \fm$, $y \in L$ and  $ay \notin \fm M$, it follows that $\frac{a}{1}\cdot \frac{y}{1} \notin \fn M'$. (Indeed, if $\frac{ay}{1} \in \fn M'$, then $say\in \langle \fm, X \rangle (M+XL[X])$ for some $s\in R[X]\smallsetminus \langle \fm, X \rangle$, and hence comparing the terms, one obtains that $ay\in \fm M$, a contradiction). Therefore $\fn(M':_{L'} \fn)\neq \fn M'$.
	
	(3) As $R$-modules, we have the following isomorphisms:
	\begin{equation}\label{M'-isomorphism-with-direct-sum}
		\dfrac{M+XL[X]}{X(M+XL[X])} \cong \dfrac{M \oplus XL \oplus X^2L[X]}{0\oplus XM \oplus X^2 L[X]} \cong M \oplus \frac{L}{M}.
	\end{equation}
	Localizing \eqref{M'-isomorphism-with-direct-sum} with respect to $T:= R[X] \smallsetminus {\langle \fm, X \rangle}$, we get the desired isomorphism $M'/XM' \cong M \oplus L/M$ as $R$-modules.

	(4) Note that $X$ is regular over both $S$ and $M'$. Therefore
	\begin{align*}
		\hdim_S(M') &=\hdim_{S/XS}(M'/XM') \quad\mbox{[by Lemmas~\ref{lem:H-dim-M-M//xM} and \ref{lem:CM-dim-relation-mod-x-R-S}]} \\
		&=\hdim_R(M \oplus L/M) \quad\mbox{[by (3)]}\\
		&= \max\{\hdim_R(M), \hdim_R(L/M)\} \quad \mbox{[by \ref{para:H-dim-direct-sum}]}\\
		&=\hdim_R(L/M) \quad\mbox{[by \ref{para:H-dim-syz-relations} as $L$ is free]}
	\end{align*}
	when H is proj and G respectively. The same equalities hold true for complexity and curvature as well. When {\rm H} is {\rm CI} and {\rm CM}, for the last two equalities, by Lemma~\ref{lem:CM-direct-sum}, one directly has $\hdim_R(M \oplus L/M) = \hdim_R(L/M)$.
   
   (5) These equalities are shown in \cite[Lem.~3.3]{DG22}.
\end{proof}

\begin{para}\label{para:k-summand-M-mod-xM}
	Let $M$ be a Burch submodule of some $R$-module $L$. It is shown in \cite[Lem.~3.6]{DK23} that if $\depth(M)\ge 1$, then there exists an $M$-regular element $x \in \fm$ such that $k$ is a direct summand of $M/xM$. Note that this element $x$ can be chosen in such a way that $x\notin\fm^2$. Indeed, in the proof of \cite[Lem.~3.6]{DK23}, we need to use the (general version of) prime avoidance lemma by avoiding the prime ideals in $\Ass_R(M)$ and two (possibly non-prime) ideals $\fm^2$ and $(\fm M :_R (M:_L\fm) )$. Moreover, in addition, if $\depth(R)\ge 1$, then $x$ can be chosen to be $R$-regular as well.
\end{para}

\begin{para}\cite{DK23}\label{Dey-Kobayashi-results} 
	Let $M$ be a Burch submodule of some $R$-module $L$. Let $n\ge 1$ be an integer. For an $R$-module $N$, the following statements hold true.
	\begin{enumerate}[\rm (1)]
		\item \cite[Prop.~3.16, Cor.~3.14 and Prop.~3.15]{DK23} $\pd_R(N)<n$ if at least one of the following conditions holds true.
		\begin{enumerate}[\rm (i)]
			\item $\Tor^R_n(N,M) = \Tor^R_{n-1}(N,M) = 0$.
			\item $\Ext_R^n(N,M) = \Ext_R^{n+1}(N,M) = 0$.
			\item $\Tor^R_n(N,L/M) = \Tor^R_{n+1}(N,L/M) = 0$.
			\item $\Ext_R^{n}(N,L/M) = \Ext_R^{n-1}(N,L/M) = 0$.
		\end{enumerate}
		\item \cite[Prop.~3.18]{DK23} $\id_R(N)<\infty$ if $\depth(M)\ge 1$ and $\Ext_R^i(M,N)=0$ for any two consecutive values of $i\ge \depth(N)-1$.
	\end{enumerate}
\end{para}

In the next section, we provide a variation of \cite[Prop.~3.18]{DK23}, see Theorem~\ref{thm:Gor-char-vanishing-Ext}.

\section{Main results and applications}\label{sec:Main-results-applications}

We start by observing the following characterizations of various local rings in terms of Burch submodules and their quotients from the existing results in the literature. The statement \ref{thm:H-dim-Burch-submodules}.(1) is noted in \cite[2.5]{DKT20} and \cite[3.11]{DK23} for Burch ideals and submodules respectively only for projective complexity and curvature.

\begin{theorem}\label{thm:H-dim-Burch-submodules}
	Let $M$ be a Burch submodule of some $R$-module $L$. Then
	\begin{enumerate}[\rm (1)]
		\item $M$ has maximal projective $($resp., injective$)$ complexity and curvature.
		\item $R$ is regular
		$\Longleftrightarrow$ $\Ext_R^n(M,N) = \Ext_R^{n+1}(M,N)=0$ for some $n\ge 1$, where $N$ is a Burch submodule of some $R$-module. Under these conditions, $\pd_R(M)<n$.
		\item $\pd_R(L/M)<\infty$ $\Longleftrightarrow$ $R$ is regular $\Longleftrightarrow$ $\id_R(L/M)<\infty$.
		\item The following are equivalent: {\rm (i)} $R$ is complete intersection, {\rm (ii)} $\cidim_R(M)<\infty$,  {\rm (iii)} $\cidim_R(L/M)<\infty$, {\rm (iv)} $\projcx_R(M)<\infty$, {\rm (v)} $\injcx_R(M)<\infty$, {\rm (vi)} $\projcurv_R(M)\le 1$ and {\rm (vii)} $\injcurv_R(M)\le 1$.
		\item $R$ is Gorenstein $\Longleftrightarrow$ $\Ext_R^n(M,R)=0$ for all $n\gg 0$ $\Longleftrightarrow$ $\Ext_R^n(L/M,R)=0$ for all $n\gg 0$.
		\item All together, $R$ is {\rm H} $\Longleftrightarrow$ $\hdim_R(M)<\infty$ $\Longleftrightarrow$ $\hdim_R(L/M)<\infty$, where {\rm H} denotes {\rm proj}, {\rm inj} $($regular in case of rings$)$, {\rm CI} and {\rm G} respectively.
	\end{enumerate}
\end{theorem}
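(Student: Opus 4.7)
The plan is to build everything on two inputs: (a) Avramov's Theorem~4 from \cite{Avr96}, which under the Burch condition $\fm(M:_L\fm)\neq\fm M$ places $k$ as a direct summand of $\Omega_n^R(M)$ for some $n$ (and, through the short exact sequence $0\to M\to L\to L/M\to 0$, also as a summand of $\Omega_n^R(L/M)$); and (b) the Dey--Kobayashi rigidity statements collected in \ref{Dey-Kobayashi-results}. From (a), part~(1) will follow at once: $\projcx_R(M)\ge\projcx_R(k)$, which combined with the universal upper bound in \ref{para:global-cx-curv} produces equality, and similarly for curvature and the injective analogues.

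For (2)'s $\Leftarrow$ direction, I would apply \ref{Dey-Kobayashi-results}(1)(ii) to the Burch submodule $N$ to deduce $\pd_R(M)<n$; combining this with (1) forces $\projcx_R(k)=\projcx_R(M)=0$, so $\pd_R(k)<\infty$ and $R$ is regular. For (3), the nontrivial direction is handled by taking $N=k$ in \ref{Dey-Kobayashi-results}(1)(iii)--(iv): finite $\pd_R(L/M)$ (resp.\ finite $\id_R(L/M)$) yields eventual vanishing of $\Tor_n^R(k,L/M)$ (resp.\ of $\Ext_R^n(k,L/M)$), and the cited items then deliver $\pd_R(k)<\infty$. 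For (5), I would use that $k$ is a summand of $\Omega_n^R(M)$ (resp.\ of $\Omega_n^R(L/M)$), so $\Ext_R^i(k,R)$ appears as a summand of $\Ext_R^{i+n}(M,R)$ (resp.\ $\Ext_R^{i+n}(L/M,R)$) for $i\ge 1$; eventual vanishing of the latter then propagates to $k$, giving $\gdim_R(k)<\infty$, i.e., $R$ Gorenstein, via \ref{para:char-local-rings-via-H-dim}.

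Part (4) should split naturally. The implication (i) $\Rightarrow$ everything is standard, and (ii) $\Rightarrow$ (iv) is the inequality $\projcx_R(M)\le\cidim_R(M)$ from \cite{AGP97}. Each of (iv)--(vii) $\Rightarrow$ (i) follows from (1): transferring the finiteness or bound from $M$ to $k$ (with $\beta_n^R(k)=\mu_n^R(k)$ handling the injective case) reduces it to the classical characterizations of CI via $\projcx_R(k)<\infty$ or $\projcurv_R(k)\le 1$. For (iii) $\Rightarrow$ (i) I would invoke the $L/M$-analogue of (1): $\projcx_R(L/M)=\projcx_R(k)$, so finite $\cidim_R(L/M)$, which bounds $\projcx_R(L/M)$, forces $\projcx_R(k)<\infty$ and hence $R$ CI. Finally, (6) assembles the preceding items.

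The main obstacle I foresee is establishing the $L/M$-analogue of maximality used in (4)(iii) and in the $L/M$-half of (5). Because $L$ is not assumed free, relating $\Omega_n^R(L/M)$ to $\Omega_n^R(M)$ requires Schanuel-type identities plus a Krull--Schmidt argument to preserve the $k$-summand. If that transfer is not clean, the fallback is to quote the version of Avramov's theorem phrased directly for the quotient of a Burch submodule, which fits with the depth-zero observation in \ref{rmk:Burch-submodules}.
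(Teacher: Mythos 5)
There is a genuine gap, and it sits at the foundation of your argument. Your input (a) misstates Avramov's theorem: \cite[Thm.~4]{Avr96} does not say that $k$ is a direct summand of $\Omega_n^R(M)$ for some $n$; it directly asserts maximal complexity and curvature for a submodule $N$ of $X$ with $\fm X\subseteq N$ and $\fm N\neq\fm X$ (applied here to $M\subseteq (M:_L\fm)$), and its proof goes through the homotopy Lie algebra, not syzygies. Worse, the claim you substitute for it is simply false whenever $\depth(R)\ge 1$: for $n\ge 1$ the module $\Omega_n^R(M)$ is a submodule of a free module, hence has positive depth and cannot have $k$ as a direct summand. Part (1) survives because its conclusion is literally Avramov's theorem and you can cite it directly, but your proofs of (5) and of (4)(iii)$\Rightarrow$(i) collapse, since both rest entirely on extracting $\Ext_R^i(k,R)$ (resp.\ Betti numbers of $k$) as summands from syzygies of $M$ and $L/M$. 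Your proposed fallback for the $L/M$ half --- ``quote the version of Avramov's theorem phrased directly for the quotient'' --- does not exist: whether $L/M$ has maximal complexity and curvature is posed as an \emph{open question} at the end of the paper, so it cannot be invoked.

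The missing idea is the test-module mechanism. The paper observes that the Tor-rigidity statements \ref{Dey-Kobayashi-results}(1)(i) and (iii) say exactly that $M$ and $L/M$ are test $R$-modules, and then quotes \cite[Thm.~4.4]{CS16} (eventual vanishing of $\Ext_R^n(T,R)$ for a test module $T$ forces $R$ Gorenstein) for (5), and \cite[Thm.~3.4.4]{Ta19} (finite CI-dimension of a test module forces $R$ complete intersection) for (4)(iii)$\Rightarrow$(i). Your treatments of (2), (3), and the remaining implications of (4) do match the paper's, but as written the proposal does not prove (5) or the full equivalence in (4).
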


\begin{proof}
	Since $M$ is a submodule of $(M:_L \fm)$ satisfying $M \supseteq \fm (M:_L \fm) \neq \fm M$, by virtue of \cite[Thm.~4]{Avr96}, $M$ has maximal projective $($resp., injective$)$ complexity and curvature. For (2), if $\Ext_R^n(M,N) = \Ext_R^{n+1}(M,N)=0$, then by \ref{Dey-Kobayashi-results}.(1).(ii), $\pd_R(M)<n$, which implies that $\cx_R(k)=\cx_R(M)=0$, i.e., $\pd_R(k)<\infty$, hence $R$ is regular. The equivalences in (3) can be deduced from \ref{Dey-Kobayashi-results}.(1).(iii) and (iv) respectively. The statements in (4) (except (4).(i) $\Leftrightarrow$ (iii)) are consequences of (1), see, e.g., \cite[pp.~321 and Thm.~3]{Avr96}. In view of \ref{Dey-Kobayashi-results}.(1).(i) and (iii), $M$ and $L/M$ are test $R$-modules. Hence the equivalences in (5) and (4).(i) $\Leftrightarrow$ (iii) can be obtained from \cite[Thm.~4.4]{CS16} and \cite[Thm.~3.4.4]{Ta19} respectively. The equivalences in (6) are consequences of (2), (3), (4) and (5).
\end{proof}

The result on CM-dimension in the theorem below is totally new, while the results on other homological invariants can be observed from Theorem~\ref{thm:H-dim-Burch-submodules}. However, we give a simple, elementary and combined proof of the results. Note that Theorem~\ref{thm:H-dim-Burch-submodules}.(1) is shown by applying \cite[Thm.~4]{Avr96}, where cohomology representation of the homotopy Lie algebra is used in the proof of \cite[Thm.~4]{Avr96}.

\begin{theorem}\label{thm:CM-dim-Burch-submodules}
	Let $M$ be a Burch submodule of some $R$-module $L$. Suppose either $\depth(M)\ge 1$, or $L$ is free $($e.g., $M=I$ is a Burch ideal of $R=L$$)$. Then the following statements hold true.
	\begin{enumerate}[\rm (1)]
		\item The ring $R$ is {\rm H} \iff $\hdim_R(M)<\infty$, where {\rm H} denotes {\rm proj} $($regular in case of rings$)$, {\rm CI}, {\rm G} and {\rm CM} respectively.
		\item $\cx_R(M) = \cx_R(k)$ and $\curv_R(M) = \curv_R(k)$.
	\end{enumerate}
\end{theorem}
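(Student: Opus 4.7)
The forward direction is immediate from \ref{para:char-local-rings-via-H-dim}. For the converse, the plan is to combine the extension construction in Lemma~\ref{lem:Extension-of-Burch-submodule} with the test-module properties of Burch submodules recorded in \ref{Dey-Kobayashi-results}(1), and to transfer the proj and G results to CI and CM respectively via (G-)quasi-deformations.

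The first reduction step handles the case $L$ free. Since $L$ is free, $M$ is a first syzygy of $L/M$ (up to a free summand), so by \ref{para:H-dim-syz-relations}, $\hdim_R(M)<\infty$ \iff $\hdim_R(L/M)<\infty$. Lemma~\ref{lem:Extension-of-Burch-submodule}(4)(i) then makes this equivalent to $\hdim_S(M')<\infty$, where $S=R[X]_{\langle\fm,X\rangle}$ and $M'\subseteq L'$ is the Burch submodule constructed there. Since $X$ is $M'$-regular, $\depth(M')\ge 1$; and since $R$ is H \iff $S$ is H by Lemma~\ref{lem:Extension-of-Burch-submodule}(1), the free case reduces to the depth-positive case applied to $M'\subseteq L'$ over $S$.

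For the depth-positive case I would argue as follows. For $\mathrm{H}=\mathrm{proj}$: \ref{Dey-Kobayashi-results}(1)(i) with $N=k$ forces $\pd_R(k)<\infty$ once $\pd_R(M)<\infty$, so $R$ is regular. For $\mathrm{H}=\mathrm{G}$: this is Theorem~\ref{thm:H-dim-Burch-submodules}(5). For $\mathrm{H}=\mathrm{CI}$ and $\mathrm{H}=\mathrm{CM}$: finiteness of $\cidim_R(M)$ (resp.,~$\cmdim_R(M)$) produces a (G-)quasi-deformation $R\to R'\leftarrow Q$ with $\pd_Q(M')<\infty$ (resp.,~$\gdim_Q(M')<\infty$); since $R\to R'$ is faithfully flat, the Burch condition is preserved (as $(M':_{L'}\fm')=(M:_L\fm)\otimes_R R'$ by flatness), so $M'$ is Burch in $L'$ over $R'$ and hence also over $Q$ (since $R'=Q/I$ with $I\subseteq\fn$ and so the relevant colons and products agree). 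Applying the proj case over $Q$ gives $Q$ regular, hence $R$ is CI; applying the G case over $Q$ gives $Q$ Gorenstein, and since the kernel $I$ is G-perfect the Auslander-Bridger formula forces $R'=Q/I$ to be CM, hence $R$ is CM. Statement (2) follows by the same reduction: Lemma~\ref{lem:Extension-of-Burch-submodule}(4)(ii) and (5) reduce $\cx_R(M)=\cx_R(k)$ and $\curv_R(M)=\curv_R(k)$ to the analogues for $M'$ over $S$ in the depth-positive setting, where $\cx_S(M')\ge\cx_S(k)$ follows from \ref{Dey-Kobayashi-results}(1)(i) applied to $N=k$ (and similarly for curvature), while the reverse inequality is trivial from \ref{para:global-cx-curv}.

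The hardest part is the G case in the depth-positive setting, which is not accessible from the Tor-test \ref{Dey-Kobayashi-results}(1)(i) alone and requires the Ext-test \ref{Dey-Kobayashi-results}(1)(ii) or the test-module machinery of \cite{CS16} as packaged in Theorem~\ref{thm:H-dim-Burch-submodules}(5). The CM case piggybacks on G through the G-quasi-deformation route of \ref{para:CM-defn-1}, which is exactly why the preparatory Lemma~\ref{lem:CM-direct-sum} and Lemma~\ref{lem:CM-dim-relation-mod-x-R-S} are developed in Section~\ref{sec:G-C-dim-CM-dim}.
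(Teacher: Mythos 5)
Your reduction from the free case to the depth-positive case matches the paper's, and your treatment of H = proj and H = G in the depth-positive setting is fine. But the heart of the theorem is the CM case (and the uniform treatment of all four H's), and there your argument has a genuine gap. You claim that a (G-)quasi-deformation $R \to R' \leftarrow Q$ carries the Burch submodule $M \subseteq L$ to a Burch submodule $M \otimes_R R' \subseteq L \otimes_R R'$, justified by ``$(M':_{L'}\fm')=(M:_L\fm)\otimes_R R'$ by flatness''. This is false in general: flatness only gives $(M':_{L'}\fm R')=(M:_L\fm)\otimes_R R'$, and $\fm'$ is typically strictly larger than $\fm R'$ for the flat maps that occur in quasi-deformations. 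Concretely, for $R'=R[Y]_{\langle\fm,Y\rangle}$ one has $\depth_{R'}\bigl((L\otimes_R R')/(M\otimes_R R')\bigr)=\depth_R(L/M)+1=1$, so by Remark~\ref{rmk:Burch-submodules} the module $M\otimes_R R'$ cannot be Burch in $L\otimes_R R'$. This is exactly the obstruction that forces Lemma~\ref{lem:Extension-of-Burch-submodule} to replace $M\otimes_R S$ by the modified submodule $(M+XL[X])_{\langle\fm,X\rangle}$; no such repair is available for the arbitrary flat map supplied by a quasi-deformation, so your CI and CM arguments do not go through. Your proof of (2) has a related problem: the test property \ref{Dey-Kobayashi-results}(1)(i) only says that two consecutive vanishing Tor's force finite projective dimension; it yields no comparison between the growth rates of $\beta_n^R(M)$ and $\beta_n^R(k)$, so it cannot give $\cx_R(M)\ge\cx_R(k)$ or $\curv_R(M)\ge\curv_R(k)$.

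The paper's proof avoids all of this with one further ingredient you are missing: by \cite[Lem.~3.7]{DK22}, a Burch submodule $M$ with $\depth(M)\ge 1$ admits an element $x\in\fm\smallsetminus\fm^2$, regular on both $R$ and $M$, such that $k$ is a direct summand of $M/xM$ over $R/xR$. Combining \ref{para:H-dim-direct-sum} with Lemma~\ref{lem:H-dim-M-M//xM} then gives $\hdim_{R/xR}(k)\le\hdim_{R/xR}(M/xM)\le\hdim_R(M)$ simultaneously for proj, CI, G and CM, which settles (1), and the same chain with $\cx$ and $\curv$ in place of $\hdim$ proves (2). You should replace your case-by-case deformation argument with this direct-summand argument.
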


\begin{proof}
	The `only if' parts in (1) are well known, see \ref{para:char-local-rings-via-H-dim}. So we need to prove (2) and the `if' parts in (1). When $L$ is free, note that $\hdim_R(M)<\infty$ \iff $\hdim_R(L/M)<\infty$ (Lemma~\ref{lem:CM-direct-sum}). So, in the case when $L$ is free, by virtue of Lemma~\ref{lem:Extension-of-Burch-submodule}, we may replace $R$ and $M$ by $S:=R[X]_{\langle \fm, X \rangle}$ and $M':=(M+XL[X])_{\langle \fm, X \rangle}$ respectively, and assume that $\depth(R)\ge 1$ and $\depth(M)\ge 1$. Then, in view of \ref{para:k-summand-M-mod-xM}, there exists an element $x \in \fm\smallsetminus\fm^2$ which is regular over both $R$ and $M$ such that $k$ is a direct summand of $M/xM$ as an $R/xR$-module. Hence 
  \begin{align*}
  	\hdim_{R/xR}(k) & \le \hdim_{R/xR}(M/xM) \quad \mbox{[by \ref{para:H-dim-direct-sum}]} \\ & \le \hdim_R(M) \quad \mbox{[by Lemma~\ref{lem:H-dim-M-M//xM}].}
  \end{align*} 
   Thus, if $\hdim_R(M) < \infty$, then $\hdim_{R/xR}(k) < \infty$, which (in view of \ref{para:char-local-rings-via-H-dim}) implies that $R/xR$ is H, and hence $R$ is H. For (2), note that
   \[
   		\cx_R(k) = \cx_{R/xR}(k) \le \cx_{R/xR}(M/xM) = \cx_R(M) \le \cx_R(k),
   \]
   where the respective (in)equalities are obtained from \cite[Lem.~3.3]{DG22}, \ref{para:H-dim-direct-sum}, Lemma~\ref{lem:H-dim-M-M//xM} and \ref{para:global-cx-curv}. Hence $\cx_R(M) = \cx_R(k)$. Similarly, $\curv_R(M) = \curv_R(k)$.
\end{proof}

The next result shows that, by virtue of Lemma~\ref{lem:Extension-of-Burch-submodule}, the depth condition on the Burch submodule $M$ in \cite[Prop.~3.18]{DK23} can be removed under some extra vanishing of certain Ext modules.

\begin{theorem}\label{thm:Gor-char-vanishing-Ext}
	Let $M$ be a Burch submodule of some $R$-module $L$. Let $N$ be an $R$-module. Let $n\ge \depth(N)$ be such that
	\[
	\Ext_R^i(M,N) = \Ext_R^j(L,N) = 0 \quad \mbox{for all }i = n-1, n, n+1\mbox{ and } j = n, n+1.
	\]
	Then $\id_R(N)<\infty$.
\end{theorem}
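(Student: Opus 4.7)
The plan is to apply \ref{Dey-Kobayashi-results}.(2) after replacing the triple $(R, M, N)$ by $(S, M', N')$, where $S = R[X]_{\langle \fm, X\rangle}$, $M' = (M + XL[X])_{\langle \fm, X\rangle}$ is the Burch submodule of $L' := L[X]_{\langle \fm, X\rangle}$ produced by Lemma~\ref{lem:Extension-of-Burch-submodule}, and $N' := N \otimes_R S$. The gain of this substitution is that $X$ is regular on the polynomial extension $L' = L \otimes_R S$ and hence on its submodule $M'$, so $\depth_S(M') \ge 1$. Moreover, the closed fiber $S/\fm S \cong k[X]_{(X)}$ is a DVR, so $R \to S$ is a faithfully flat local extension with Gorenstein closed fiber of dimension $1$; the Bass-numbers formula for flat extensions then yields $\depth_S(N') = \depth_R(N) + 1$ and $\id_R(N) < \infty$ \iff $\id_S(N') < \infty$. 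Thus it suffices to produce two consecutive integers $i \ge \depth_S(N') - 1 = \depth_R(N)$ with $\Ext_S^i(M', N') = 0$; I will aim at $i = n, n+1$, which is admissible since $n \ge \depth_R(N)$.

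To compute these Ext's, I would first derive the short exact sequence of $S$-modules
\[
0 \longrightarrow M \otimes_R S \longrightarrow M' \longrightarrow (L/M) \otimes_R S \longrightarrow 0.
\]
It comes from the $R$-decomposition $L[X] = \bigoplus_{k \ge 0} X^k L$: inside it, $M + XL[X] = M \oplus XL \oplus X^2 L \oplus \cdots$ and $M[X] = M \oplus XM \oplus X^2 M \oplus \cdots$, so $(M + XL[X])/M[X] \cong X \cdot (L/M)[X] \cong (L/M)[X]$ as $R[X]$-modules (multiplication by $X$ is an $R[X]$-isomorphism), and localizing at $\langle \fm, X\rangle$ gives the displayed sequence. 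Applying $\Hom_S(-, N')$ and using the flat base-change isomorphism $\Ext_S^i(U \otimes_R S, N') \cong \Ext_R^i(U, N) \otimes_R S$ for $U = M$ and $U = L/M$, the required vanishing $\Ext_S^i(M', N') = 0$ for $i = n, n+1$ reduces to showing $\Ext_R^i(M, N) = \Ext_R^i(L/M, N) = 0$ for $i = n, n+1$.

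The former is part of the hypothesis. For the latter, consider the long exact sequence of $\Hom_R(-, N)$ associated to $0 \to M \to L \to L/M \to 0$: the fragments
\[
\Ext_R^{n-1}(M,N) \to \Ext_R^n(L/M,N) \to \Ext_R^n(L,N)\quad\text{and}\quad \Ext_R^n(M,N) \to \Ext_R^{n+1}(L/M,N) \to \Ext_R^{n+1}(L,N)
\]
have all flanking terms zero by hypothesis, forcing $\Ext_R^n(L/M, N) = \Ext_R^{n+1}(L/M, N) = 0$. Thus $\Ext_S^n(M', N') = \Ext_S^{n+1}(M', N') = 0$; applying \ref{Dey-Kobayashi-results}.(2) to the Burch submodule $M' \subseteq L'$ over $S$ (with $\depth_S(M') \ge 1$) yields $\id_S(N') < \infty$, and faithful flatness gives $\id_R(N) < \infty$. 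The main technical step is the $S$-module identification $M'/(M \otimes_R S) \cong (L/M) \otimes_R S$; once this is established, the remainder is routine bookkeeping with long exact sequences and flat base change.
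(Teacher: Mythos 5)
Your proposal is correct, and it follows the same overall skeleton as the paper's proof: both pass to $S=R[X]_{\langle \fm, X\rangle}$ with the same auxiliary Burch submodule $M'=(M+XL[X])_{\langle \fm, X\rangle}$ of $L'$, both first extract $\Ext_R^j(L/M,N)=0$ for $j=n,n+1$ from the long exact sequence of $0\to M\to L\to L/M\to 0$, and both finish by feeding $\Ext_S^i(M',N')=0$ for $i=n,n+1$ (with $\depth_S(M')\ge 1$ and $n\ge\depth_S(N')-1$) into \ref{Dey-Kobayashi-results}.(2) and descending along the faithfully flat map $R\to S$. The one step where you genuinely diverge is the computation of $\Ext_S^i(M',N')$: the paper reduces modulo $X$, invoking Lemma~\ref{lem:Extension-of-Burch-submodule}.(3) (the $R$-module decomposition $M'/XM'\cong M\oplus L/M$), the Rees change-of-rings isomorphism $\Ext_S^{i+1}(M'/XM',N')\cong \Ext_{S/XS}^{i}(M'/XM',N'/XN')$, and then Nakayama applied to the finitely generated module $\Ext_S^i(M',N')$ via the sequence $0\to M'\xrightarrow{X} M'\to M'/XM'\to 0$. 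You instead resolve $M'$ over $S$ itself by the short exact sequence $0\to M\otimes_R S\to M'\to (L/M)\otimes_R S\to 0$ (which is correctly justified from the graded decomposition of $L[X]$) and conclude by flat base change for Ext. Your route trades the Rees-lemma-plus-Nakayama argument for an extension computed directly over $S$; it is arguably more transparent and avoids any finiteness/Nakayama considerations, while the paper's version reuses a lemma it has already established for other purposes (Theorem~\ref{thm:CM-dim-Burch-submodules}). Both arguments are complete; your bookkeeping of depths ($\depth_S(N')=\depth_R(N)+1$, so the threshold in \ref{Dey-Kobayashi-results}.(2) becomes $i\ge\depth_R(N)\le n$) matches the paper's.
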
 

\begin{proof}
	Considering the long exact sequence 
	\[
	\cdot \cdot \cdot \to \Ext_R^{l-1}(M,N) \to \Ext_R^{l}(L/M,N) \to \Ext_R^{l}(L,N) \to \Ext_R^{l}(M,N) \to \cdot \cdot\cdot,
	\]
	from the given hypotheses, one obtains that $\Ext_R^j(L/M,N)=0$ for both $j=n,n+1$. Therefore, by virtue of Lemma~\ref{lem:Extension-of-Burch-submodule}.(3), $\Ext_R^i(M'/XM',N)=0$ for both $i=n,n+1$, where $M':=(M+XL[X])_{\langle \fm, X \rangle} $, a Burch submodule of $L':=L[X]_{\langle \fm, X \rangle}$ over  $S:=R[X]_{\langle \fm, X \rangle}$. Set  $N':=N[X]_{\langle \fm, X \rangle}$. Then, by \cite[3.1.16]{BH93},
	\begin{align*}
	\Ext_S^{i+1}(M'/XM',N') \cong \Ext_{S/XS}^{i}(M'/XM',N'/XN')=\Ext_R^{i}(M'/XM',N)=0
	\end{align*}
	for $i=n,n+1$. Hence, in view of the exact sequences
	\[
	\Ext_S^{i}(M',N') \stackrel{X}{\longrightarrow} \Ext_S^{i}(M',N') \longrightarrow \Ext_S^{i+1}(M'/XM',N'),
	\]
	the Nakayama Lemma yeilds that $\Ext_S^{i}(M',N')=0$ for $i=n,n+1$. Note that $\depth_S(M')\ge 1$ and $n\ge \depth_R(N)=\depth_S(N')-1$. Therefore, by \ref{Dey-Kobayashi-results}.(2), $\id_S(N')<\infty$, which implies that $\id_{S/XS}(N'/XN')< \infty$ (cf.~\cite[3.1.15]{BH93}), i.e., $\id_R(N)<\infty$.
\end{proof}

As an immediate consequence of Theorem~\ref{thm:Gor-char-vanishing-Ext}, one deduces the following.

\begin{corollary}\label{cor:char-Gor-Burch-ideals}
	Let $M$ be a Burch submodule of some free $R$-module $($e.g., $M=I$ is a Burch ideal of $R$$)$. If $\Ext_R^m(M,R)=0$ for any three consecutive values of $m \ge \max\{\depth(R)-1,0\}$, then $R$ is Gorenstein.
\end{corollary}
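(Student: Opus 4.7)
The plan is to derive this corollary by directly applying Theorem~\ref{thm:Gor-char-vanishing-Ext} with $N := R$ and $L$ a free $R$-module of which $M$ is a Burch submodule. The key observation that makes this work is that when $L$ is free (equivalently projective), $\Ext_R^j(L,R) = 0$ for every $j \ge 1$, so the Ext-vanishing hypothesis on $L$ demanded by Theorem~\ref{thm:Gor-char-vanishing-Ext} becomes automatic as soon as the index $n$ chosen there is at least $1$.

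Next I would carry out the index bookkeeping. Suppose $\Ext_R^m(M,R) = 0$ for $m = m_0, m_0 + 1, m_0 + 2$, where $m_0 \ge \max\{\depth(R)-1, 0\}$. I would set $n := m_0 + 1$, so that the three consecutive vanishings rewrite as $\Ext_R^i(M,R) = 0$ for exactly $i = n-1, n, n+1$, matching the hypothesis of Theorem~\ref{thm:Gor-char-vanishing-Ext}. Then I would verify the required lower bound $n \ge \depth(N) = \depth(R)$ by splitting on two cases: if $\depth(R) = 0$, then $n \ge 1 \ge \depth(R)$; if $\depth(R) \ge 1$, then $m_0 \ge \depth(R) - 1$ gives $n = m_0 + 1 \ge \depth(R)$. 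In particular $n \ge 1$, so the automatic vanishing of $\Ext_R^j(L,R)$ for $j = n, n+1$ applies.

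With all hypotheses of Theorem~\ref{thm:Gor-char-vanishing-Ext} verified, its conclusion gives $\id_R(R) < \infty$, which by definition says that $R$ is Gorenstein.

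Since the result is presented as an immediate consequence, there is essentially no obstacle: the whole content lies in choosing the right value of $n$ and noticing the trivial vanishing for free $L$. The only place where care is needed is the boundary case $\depth(R) = 0$, where the definition $\max\{\depth(R)-1, 0\} = 0$ allows $m_0 = 0$, and one must check that $n = 1$ still satisfies $n \ge \depth(N)$ — which it does trivially.
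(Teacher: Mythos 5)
Your proposal is correct and follows exactly the paper's own argument: apply Theorem~\ref{thm:Gor-char-vanishing-Ext} with $N=R$ and $L$ free, shift the index so the three vanishings sit at $n-1,n,n+1$, note $n\ge\depth(R)$ from $m_0\ge\depth(R)-1$, and use freeness of $L$ to get $\Ext_R^j(L,R)=0$ for $j\ge 1$. The case split on $\depth(R)$ is harmless but unnecessary, since $m_0\ge\max\{\depth(R)-1,0\}$ already gives $m_0+1\ge\depth(R)$ directly.
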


\begin{proof}
	In Theorem~\ref{thm:Gor-char-vanishing-Ext}, consider $L$ as a free $R$-module, and $N=R$. Let $m \ge \max\{\depth(R)-1,0\}$. Then $m \ge \depth(R)-1$ and $m+1\ge 1$. Hence $\Ext_R^j(L,R) = 0$ for all $j\ge m+1$. Therefore, if $\Ext_R^i(M,R)=0$ for $i = m, m+1, m+2$, by Theorem~\ref{thm:Gor-char-vanishing-Ext}, $\id_R(R)<\infty$, i.e., $R$ is Gorenstein.
\end{proof}

In view of Proposition~\ref{prop:int-closed-ideal-Burch} and Example~\ref{exam:Burch-submodules-quotient}, Theorems~\ref{thm:H-dim-Burch-submodules}, \ref{thm:CM-dim-Burch-submodules} and \ref{thm:Gor-char-vanishing-Ext} have many applications. For example, an integrally closed ideal $I$ of $R$ with $\depth(R/I)=0$ can be used to characterize various local rings.

\begin{corollary}\label{cor:characterizations-via-int-closed-ideal}
	Let $I$ be a nonzero ideal of $R$ such that $\depth(R/I)=0$. Assume that $I$ is integrally closed or weakly $\fm$-full or Burch. Then:
	\begin{enumerate}[\rm (1)]
		\item $I$ has maximal projective $($resp., injective$)$ complexity and curvature.
		\item $R$ is regular
		$\Longleftrightarrow$ $\Ext_R^n(I,J) = \Ext_R^{n+1}(I,J)=0$ for some $n\ge 1$, where $J\neq 0$ is an ideal satisfying $\depth(R/J)=0$, and $J$ is integrally closed or weakly $\fm$-full or Burch. Under these conditions, $\pd_R(I)<n$.
		\item $R$ is complete intersection $\Longleftrightarrow$ $\cidim_R(I)<\infty$ $\Longleftrightarrow$ $\projcx_R(I)<\infty$ $\Longleftrightarrow$ $\injcx_R(I)<\infty$ $\Longleftrightarrow$ $\projcurv_R(I)\le 1$ $\Longleftrightarrow$ $\injcurv_R(I)\le 1$.
		\item $R$ is Gorenstein $\Longleftrightarrow$ $\Ext_R^n(I,R)=0$ for any three consecutive values of $n\ge \max\{\depth(R)-1,0\}$. We need two consecutive vanishing when $\depth(R)\ge 1$.
		\item $R$ is {\rm CM} $\Longleftrightarrow$ $\cmdim_R(I)<\infty$.
	\end{enumerate}
\end{corollary}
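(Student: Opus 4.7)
The plan is to reduce every instance of the hypothesis to the single case where $I$ itself is a Burch ideal of $R$ and then invoke the main results of this section. If $I$ is already Burch there is nothing to do (and Remark~\ref{rmk:Burch-submodules} makes $\depth(R/I)=0$ automatic). If $I$ is weakly $\fm$-full, Remark~\ref{rmk:strata} together with $\depth(R/I)=0$ directly gives that $I$ is Burch. The substantive case is $I$ integrally closed with $\depth(R/I)=0$: Proposition~\ref{prop:int-closed-ideal-Burch} handles this whenever $k$ is infinite (the hypothesis ``$R$ is not a field'' is automatic, since $I\neq 0$ and $I\neq R$).

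For the finite residue field case we would pass to the faithfully flat extension $R':=R[X]_{\fm R[X]}$, which has maximal ideal $\fm R'$ and infinite residue field $k(X)$. Then $IR'$ is integrally closed in $R'$ (integral closure commutes with flat base change), $\depth(R'/IR')=\depth(R/I)=0$, and $R'$ is not a field, so Proposition~\ref{prop:int-closed-ideal-Burch} gives that $IR'$ is Burch in $R'$. Faithful flatness yields $(IR':_{R'}\fm R')=(I:_R\fm)R'$ and $(\fm I)R'=\fm R'\cdot IR'$; consequently $\fm(I:_R\fm)\not\subseteq \fm I$ in $R$ if and only if the analogous noncontainment holds in $R'$. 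Hence $I$ is itself Burch in $R$, and no transfer of homological invariants across the extension is required.

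Once $I$ is a Burch ideal, the assertions follow by direct appeal to earlier results: (1) from Theorem~\ref{thm:H-dim-Burch-submodules}.(1); (2) from Theorem~\ref{thm:H-dim-Burch-submodules}.(2), applying the same reduction to the auxiliary ideal $J$; (3) from Theorem~\ref{thm:H-dim-Burch-submodules}.(4); the three-consecutive-vanishing half of (4) from Corollary~\ref{cor:char-Gor-Burch-ideals}; and (5) from Theorem~\ref{thm:CM-dim-Burch-submodules}.(1), whose free-module hypothesis is satisfied by $L=R$. For the sharpening in (4) when $\depth(R)\ge 1$, the depth lemma applied to $0\to I\to R\to R/I\to 0$ yields $\depth(I)\ge \min\{\depth(R),\depth(R/I)+1\}=1$, after which \ref{Dey-Kobayashi-results}.(2) with $M=I$ and $N=R$ forces $\id_R(R)<\infty$ from any two consecutive vanishings of $\Ext_R^i(I,R)$ with $i\ge \depth(R)-1$. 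The only nonroutine step, and the one I expect to be the main obstacle, is verifying that the Burch property is both preserved and reflected by the base change $R\hookrightarrow R[X]_{\fm R[X]}$; everything else is an immediate application of the theorems already proved.
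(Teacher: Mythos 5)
Your argument is correct and, in substance, is the paper's own proof: reduce to the case of a Burch ideal via Proposition~\ref{prop:int-closed-ideal-Burch} and Remark~\ref{rmk:strata}, then quote Theorem~\ref{thm:H-dim-Burch-submodules}.(1), (2), (4), Corollary~\ref{cor:char-Gor-Burch-ideals} and Theorem~\ref{thm:CM-dim-Burch-submodules}.(1), and for the two-consecutive sharpening in (4) use the depth lemma on $0\to I\to R\to R/I\to 0$ to get $\depth(I)\ge 1$ and then apply \ref{Dey-Kobayashi-results}.(2) with $N=R$. The one place you genuinely diverge is the finite-residue-field step. The paper says only ``passing through $R[X]_{\fm R[X]}$, we may assume that $R$ has infinite residue field,'' which tacitly requires that every invariant appearing in (1)--(5) (complexity, curvature, $\cidim$, $\cmdim$, the Ext vanishing, and the ring-theoretic properties) ascends and descends along $R\to R[X]_{\fm R[X]}$. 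You instead descend only the Burch condition itself: $IR'$ is integrally closed in $R':=R[X]_{\fm R[X]}$, Proposition~\ref{prop:int-closed-ideal-Burch} makes it Burch there, and since $(IR':_{R'}\fm R')=(I:_R\fm)R'$ and faithful flatness reflects containments of ideals, $I$ is already Burch in $R$; after that no transfer of homological data is needed. This localizes all the base-change bookkeeping to one elementary check and is arguably tighter than the paper's one-line reduction; the only external input is that integral closure of ideals commutes with $R\to R[X]_{\fm R[X]}$, which is standard but deserves a citation. Everything else in your write-up matches the paper's proof step for step.
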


\begin{proof}
	If the residue field is finite, then passing through $R[X]_{\fm R[X]}$, we may assume that $R$ has infinite residue field $k$. If $R$ is a field, then there is nothing to prove. So we may assume that $\fm\neq 0$. Therefore the proof follows from Proposition~\ref{prop:int-closed-ideal-Burch} and Remark~\ref{rmk:strata}, by applying Theorem~\ref{thm:H-dim-Burch-submodules}.(1), (2) and (4), Corollary~\ref{cor:char-Gor-Burch-ideals} and Theorem~\ref{thm:CM-dim-Burch-submodules} respectively. For the second part of (4), we must use \ref{Dey-Kobayashi-results}.(2) and the fact that $\depth(I)\ge 1$ if $\depth(R)\ge 1$.
\end{proof}

\begin{remark}
	\begin{enumerate}[(1)]
		\item 
		The statements (1), (2) and (3) in Corollary~\ref{cor:characterizations-via-int-closed-ideal} considerably strengthen the results \cite[Thm.~2.6]{GP22}, \cite[pp~947, Cor.~3]{Bu68}, \cite[6.12]{CGSZ18} and \cite[Cor.~2.7]{GP22}. Moreover, Corollary~\ref{cor:characterizations-via-int-closed-ideal}.(2) notably improves \cite[Cor.~3.14]{CIST18}, the main outcome of \cite{CIST18}.
		\item 
		When $R$ is CM and $I$ is as in Corollary~\ref{cor:characterizations-via-int-closed-ideal},	it is shown in \cite[Cor.~3.16]{CIST18} that $R$ is Gorenstein if $\Ext_R^i(I,R)=0$ for any $\dim(R)+2$ consecutive values of $i \ge 1$. Corollary~\ref{cor:characterizations-via-int-closed-ideal}.(4) considerably strengthen this result when $\dim(R) \ge 1$.
	\end{enumerate}
\end{remark}

\begin{corollary}\label{cor:characterizations-via-mN}
	Let $N$ be a submodule of an $R$-module $L$ such that $\fm N \neq 0$.
	\begin{enumerate}[\rm (1)]
		\item Let {\rm H} denotes {\rm proj}, {\rm inj} $($regular in case of rings$)$, {\rm CI} and {\rm G} respectively. Then
		\begin{center}
			$R$ is {\rm H} $\Longleftrightarrow$ $\hdim_R(\fm N)<\infty$ $\Longleftrightarrow$ $\hdim_R(L/\fm N)<\infty$.
		\end{center}
		\item If either $\depth(N)\ge 1$, or $L$ is free $($e.g., $N=J$ is an ideal of $R=L$$)$, then
		\begin{enumerate}[\rm (i)]
			\item $R$ is {\rm CM} $\Longleftrightarrow$ $\cmdim_R(\fm N)<\infty$.
			\item $R$ is Gorenstein $\Longleftrightarrow$ $\Ext_R^n(\fm N,R)=0$ for any three consecutive values of $n\ge \max\{\depth(R)-1,0\}$. We need two consecutive vanishing if $\depth(N)\ge 1$.
		\end{enumerate}
	\end{enumerate}
\end{corollary}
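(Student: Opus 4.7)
The plan is to reduce the entire corollary to the results already established for Burch submodules, by observing that $M := \fm N$ is itself a Burch submodule of $L$. This is precisely Example~\ref{exam:Burch-submodules-quotient}: the hypothesis $\fm N \neq 0$ is exactly what is needed for $\fm(\fm N :_L \fm) \not\subseteq \fm(\fm N)$ to be automatic, so $M$ is Burch in $L$. With this observation, each assertion becomes a direct quotation of one of the main theorems applied to $M$.

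For part (1), I would simply invoke Theorem~\ref{thm:H-dim-Burch-submodules}.(6) with the Burch submodule $M = \fm N$ of $L$. This gives the three-way equivalence among $R$ being H, $\hdim_R(\fm N) < \infty$, and $\hdim_R(L/\fm N) < \infty$ for H in $\{\mathrm{proj}, \mathrm{inj}, \mathrm{CI}, \mathrm{G}\}$, with no further work.

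For part (2), the first step is to verify that the depth hypothesis in Theorem~\ref{thm:CM-dim-Burch-submodules} is inherited by $M = \fm N$. If $L$ is free this is explicit, so suppose instead $\depth(N) \ge 1$. Since $\fm N$ is a submodule of $N$, any $N$-regular element of $\fm$ is automatically $\fm N$-regular, and therefore $\depth(\fm N) \ge \depth(N) \ge 1$. In either case, the hypothesis of Theorem~\ref{thm:CM-dim-Burch-submodules} holds for $M = \fm N$, which immediately yields (2)(i).

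Finally, for (2)(ii), I split on the two sufficient conditions. If $L$ is free, I would apply Corollary~\ref{cor:char-Gor-Burch-ideals} to the Burch submodule $M = \fm N$ of the free module $L$, giving that three consecutive vanishings of $\Ext_R^n(\fm N, R)$ for $n \ge \max\{\depth(R)-1,0\}$ force $R$ to be Gorenstein. If instead $\depth(N) \ge 1$, so that $\depth(\fm N) \ge 1$ by the argument above, then \ref{Dey-Kobayashi-results}.(2) applied with $N$ replaced by $R$ shows that already two consecutive vanishings of $\Ext_R^i(\fm N, R)$ for $i \ge \depth(R) - 1$ suffice to conclude $\id_R(R) < \infty$, i.e.\ $R$ is Gorenstein. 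The converse implications are standard (\ref{para:char-local-rings-via-H-dim}). There is no real obstacle here beyond the bookkeeping check that $\depth(\fm N) \ge \depth(N)$; the substantive content sits entirely in the earlier theorems.
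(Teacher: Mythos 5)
Your proposal is correct and takes essentially the same route as the paper: identify $M=\fm N$ as a Burch submodule of $L$ via Example~\ref{exam:Burch-submodules-quotient}, then quote Theorem~\ref{thm:H-dim-Burch-submodules}.(6) for (1), Theorem~\ref{thm:CM-dim-Burch-submodules}.(1) for (2)(i), and Corollary~\ref{cor:char-Gor-Burch-ideals} together with \ref{Dey-Kobayashi-results}.(2) for (2)(ii). One tiny quibble: your single-regular-element argument establishes $\depth(\fm N)\ge 1$ (which is all that is needed), not the stronger inequality $\depth(\fm N)\ge\depth(N)$ that you assert in passing, since a full $N$-regular sequence need not stay regular on a submodule beyond its first element.
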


\begin{proof}
	In view of Example~\ref{exam:Burch-submodules-quotient}, $\fm N$ is a Burch submodule of $L$. Hence the equivalences in (1) and (2).(i) can be obtained from Theorems~\ref{thm:H-dim-Burch-submodules}.(6) and \ref{thm:CM-dim-Burch-submodules}.(1) respectively, while (2).(ii) follows from \ref{Dey-Kobayashi-results}.(2) and Corollary~\ref{cor:char-Gor-Burch-ideals}.
\end{proof}

In view of Theorems~\ref{thm:H-dim-Burch-submodules} and \ref{thm:CM-dim-Burch-submodules}, the following questions arise naturally.

\begin{question}
	Let $M$ be a Burch submodule of some $R$-module $L$.
	\begin{enumerate}[\rm (1)]
		\item Does $L/M$ have maximal projective $($resp., injective$)$ complexity and curvature?
		\item If $\cmdim_R(L/M)<\infty$, then is $R$ {\rm CM}?
	\end{enumerate}
\end{question}

Keeping the results in Theorems~\ref{thm:H-dim-Burch-submodules}.(6) and \ref{thm:CM-dim-Burch-submodules}.(1), \ref{Dey-Kobayashi-results}.(2) and Corollary~\ref{cor:char-Gor-Burch-ideals} in mind, we expect positive answers to the following questions.

\begin{question}\label{ques:Burch-sub-quo-CM-dim}
	Let $M$ be a Burch submodule of depth $0$ of some $R$-module $L$.
	\begin{enumerate}[(1)]
		\item If $\cmdim_R(M)<\infty$, then is $R$ {\rm CM}?
		\item If $\Ext_R^n(M,R)=0$ for any three consecutive values of $n\ge \max\{\depth(R)-1,0\}$, then is $R$ Gorenstein?
	\end{enumerate}
\end{question}

The following question is a particular case of Question~\ref{ques:Burch-sub-quo-CM-dim}.
\begin{question}\label{ques:depth-mN-zero-CM-dim}
	Let $N$ be an $R$-module such that $\depth(N)=0$ and $\fm N \neq 0$.
	\begin{enumerate}[(1)]
		\item If $\cmdim_R(\fm N)<\infty$, then is $R$ {\rm CM}?
		\item If $\Ext_R^n(\fm N,R)=0$ for any three consecutive values of $n\ge \max\{\depth(R)-1,0\}$, then is $R$ Gorenstein?
	\end{enumerate}
\end{question}

Theorem~\ref{thm:CM-dim-Burch-submodules}.(1) and Corollaries~\ref{cor:char-Gor-Burch-ideals} and \ref{cor:characterizations-via-mN}.(2) provide positive answers to Questions~\ref{ques:Burch-sub-quo-CM-dim} and \ref{ques:depth-mN-zero-CM-dim} under the conditions `$L$ is free' and `$N$ is a submodule of a free $R$-module' respectively.

\section*{Acknowledgments}
Ghosh was supported by Start-up Research Grant (SRG) from SERB, DST, Govt.~of India with the Grant No SRG/2020/000597. Saha was supported by Junior Research Fellowship (JRF) from UGC, MHRD, Govt.\,of India.


\begin{thebibliography}{AAAA}
%
%
	
	\bibitem{AP05} Asadollahi, Javad; Puthenpurakal, Tony J., An analogue of a theorem due to Levin and Vasconcelos. Commutative algebra and algebraic geometry, 9–15, Contemp. Math., 390, Amer. Math. Soc., Providence, RI, 2005.
	
	\bibitem{AB69} M. Auslander and M. Bridger, {\it Stable module theory}, Mem. Amer. Math. Soc. {\bf 94} (1969).
%
%
	
	\bibitem{Avr96} L. L. Avramov, {\it Modules with extremal resolutions}, Math. Res. Lett. {\bf 3} (1996), 319--328.
	
	\bibitem{Avr98} L. L. Avramov, {\it Infinite free resolutions}, Six lectures on commutative algebra, Bellaterra 1996, Progr. Math. 166, Birkh\"{a}user, Basel, (1998), 1--118.
	
	\bibitem{AGP97} L.~L.~Avramov, V.~N.~Gasharov and I.~V.~ Peeva, {\it Complete intersection dimension}, Inst. Hautes \'{E}tudes Sci. Publ. Math. {\bf 86} (1997), 67--114.
%
	
	\bibitem{BH93} W.~Bruns and J.~Herzog, \emph{Cohen-{M}acaulay rings}, Cambridge Studies in Advanced Mathematics, vol.~39, Cambridge University Press, Cambridge, 1993.
	
	\bibitem{Bu68} L.~Burch, {\it On ideals of finite homological dimension in local rings}, Proc. Camb. Philos. Soc. {\bf 64}, (1968), 941--948.
%
	
	\bibitem{CGSZ18} O.~Celikbas, M.~Gheibi, A.~Sadeghi and M.R.~Zargar, {\it Homological dimensions of rigid modules}, Kyoto J. Math. {\bf 58} (2018), 639--669.
	
	\bibitem{CIST18} O.~Celikbas, K.~Iima, A.~Sadeghi and R.~Takahashi, On the ideal case of a conjecture of Auslander and Reiten. Bull. Sci. Math. {\bf 142} (2018), 94--107.

	\bibitem{CS16} Olgur Celikbas and Sean Sather-Wagstaff, {\it Testing for the Gorenstein property}, Collect. Math. {\bf 67} (2016), 555--568.
	
	\bibitem{Ch00} L.W.~Christensen, {\it Gorenstein dimensions}, Lecture Notes in Mathematics {\bf 1747}, Springer-Verlag, Berlin, 2000.
%
%
	
	\bibitem{DKT20} H.~Dao, T.~Kobayashi and R.~Takahashi, {\it Burch ideals and Burch rings}, Algebra Number Theory {\bf 14} (2020), 2121--2150.
%
	
	\bibitem{DG22} S.~Dey and D.~Ghosh, {\it Complexity and rigidity of Ulrich modules, and some applications}, Math. Scand. {\bf 129} (2023), 209--237.
	
	\bibitem{DK23} S.~Dey and T.~Kobayashi, {\it Vanishing of $($co$)$homology of Burch and related submodules}, Illinois J. Math. {\bf 67} (2023), 101--151.
%
%
%
%
	
	\bibitem{Ge01} A.A.~Gerko, {\it On homological dimensions}, Sb. Math. 192 (2001), no. 7-8, 1165–1179.
%
	
	\bibitem{GP22} D.~Ghosh and T.J.~Puthenpurakal, {\it Integrally closed $\fm$-primary ideals have extremal resolutions}, Arch. Math. (Basel) {\bf 121} (2023), 123-131.
	
	\bibitem{Go87} S. Goto, {\it Integral closedness of complete-intersection ideals}, J. Algebra {\bf 108} (1987), 151--160.
%
%
%
%
%
%
%
%
%
%
	
	\bibitem{Go84} Golod, E. S., {\it G-dimension and generalized perfect ideals}, Algebraic geometry and its applications, Trudy Mat. Inst. Steklov. {\bf 165} (1984), 62--66.
%
%
%
	
	\bibitem{LV68} G. Levin and W. V. Vasconcelos, {\it Homological dimensions and Macaulay rings}, Pacific J. Math {\bf 25} (1968), 315--323.
%
%
%
	
	\bibitem{Mat86} H.~Matsumura, {\it Commutative Ring Theory}, Cambridge University Press, Cambridge, 1986.
%
%
%
%

	\bibitem{SSW01} S.~Sather-Wagstaff, {\it Semidualizing Modules}, \href{https://www.ndsu.edu/pubweb/~ssatherw/DOCS/sdm.pdf}{Notes available in his webpage}.
%
%
%
%

	\bibitem{Ta19} E.~Tavanfar, {\it Test modules, weakly regular homomorphisms and complete intersection dimension}, J. Commut. Algebra {\bf 15} (2023), 417--455.
	
%
%
%
\end{thebibliography}
\end{document}